\documentclass[letterpaper,10pt]{amsart}
\usepackage[utf8]{inputenc}
\usepackage{amsmath}
\usepackage{amssymb}
\usepackage{amsthm}
\usepackage{bbm}
\usepackage{geometry}
\usepackage{comment}
\usepackage{hyperref}
\usepackage{mathtools}
\usepackage{stmaryrd}
\usepackage{enumerate}

\usepackage[title]{appendix}
\newcommand*{\mailto}[1]{\href{mailto:#1}{\nolinkurl{#1}}}
\newcommand{\arxiv}[1]{\href{http://arxiv.org/abs/#1}{arXiv:#1}}
\newcommand{\doi}[1]{\href{http://dx.doi.org/#1}{DOI:#1}}

\hypersetup{
    colorlinks,
    citecolor=blue,
    filecolor=blue,
    linkcolor=blue,
    urlcolor=blue
}
\usepackage{appendix}
\usepackage{graphicx}
\usepackage{csquotes}
\newtheorem{thm}{Theorem}[section]
\newtheorem{prop}[thm]{Proposition}

\newtheorem{prob}[thm]{Open Problem}
\newtheorem{lem}[thm]{Lemma}
\newtheorem{definition}[thm]{Definition}

\newtheorem{remark}[thm]{Remark}
\newtheorem{corollary}[thm]{Corollary}

\newcommand{\N}{\mathbb N}
\newcommand{\Z}{\mathbb Z}

\newcommand{\R}{\mathbb R}

\DeclareMathOperator{\Li}{Li}
\DeclareMathOperator{\sech}{sech}

\allowdisplaybreaks

\numberwithin{equation}{section}

\begin{document}

\author[B.\ Rosenzweig]{Bart Rosenzweig}
\address[B.\ Rosenzweig]{Department of Mathematics, The Ohio State University \\
100 Math Tower, 231 West 18th Avenue, Columbus, OH 43210, USA}
\email{\mailto{rosenzweig.26@osu.edu}}

\author[J.\ Stanfill]{Jonathan Stanfill}
\address[J.\ Stanfill]{Division of Geodetic Science, School of Earth Sciences, The Ohio State University \\
275 Mendenhall Laboratory, 125 South Oval Mall, Columbus, OH 43210, USA}
\email{\mailto{stanfill.13@osu.edu}}

\title[On the fundamental solutions of two nonlocal parabolic equations]{On the fundamental solutions of two nonlocal parabolic equations related to  logarithmic Laplacians}

\subjclass{Primary: 30B50, 35A08, 35K08. Secondary: 11B68, 30B40,  35B08.}
\keywords{Nonlocal diffusion, logarithmic Laplacian, fundamental solution, Bernoulli numbers, Bell polynomials.}

\begin{abstract}
We answer in the affirmative a question posed by V. Maz'ya in \cite{Maz18} of whether one can continue as a meromorphic function of $t$ the series representation of the fundamental solution of a certain nonlocal parabolic equation associated to a logarithmic Laplacian, which arises in the study of boundary value problems associated to the ordinary Laplacian on domains with thin cavities, as in \cite{MNP00}. The $a\ln(n) +O(1)$ growth of the eigenvalues of the integral operator, together with explicit formulas for the eigenfunctions and the subleading asymptotic behavior of the eigenvalues, allows one to show that the fundamental solution is reminiscent of a sum of shifted Riemann zeta functions or polylogarithms, depending on the spatial variable. We show an analogous result for an operator also considered in \cite{MNP00}, related to a different logarithmic Laplacian, whose structure is similar. Along the way we are led to prove and to conjecture a number of curious identities involving Bell polynomials and Bernoulli numbers related to the exponential of the digamma function which are of independent interest.
\end{abstract}

\maketitle

\section{Introduction and main results}

We study the Cauchy problems associated to two integral operators, the first of which is given by
\begin{align}
    A_1h(e^{i\varphi}) := -\int_{C}\frac{h(z)-h(y)}{|z-y|}|dz| = -\frac{1}{2}\int_0^{2\pi}\frac{h(e^{i\tau})-h(e^{i\varphi})}{|\sin(\tfrac{\tau-\varphi}{2})|}d\varphi,
\end{align}
where $z=e^{i\varphi}$, $C$ is the unit circle in $\R^2$ that has been parameterized in the usual way, and where we note that $|z-y|$ represents the chordal distance between $z,y \in C$. In what follows, we denote by $K_1(t,\varphi,\theta)$ (the spectral series representation of) the fundamental solution of the associated Cauchy problem $\partial_t u(t,\varphi) = -A_1u(t,\varphi)$. In \cite[Sect. 10.1]{Maz18}, Maz'ya posed the problem of whether meromorphically continuing $K_1(t,\varphi,\theta)$ in the $t$ variable is possible, pointing out that $K_1$ is ``reminiscent'' of the modified Riemann zeta function $\sum_{n=1}^\infty e^{-tH_n} = e^{-\gamma_Et}\sum_{n=1}^\infty n^{-t}e^{-th_n}$, where $H_n:=\sum_{k=1}^n \frac{1}{k} = \ln(n) + \gamma_E +h_n$ denotes the $n$th harmonic number, $\gamma_E$ being the Euler-Mascheroni constant, and where $h_n = O(n^{-1})$ for $n\to \infty$. We answer this question in the affirmative in Theorem \ref{Thm:Main3}, the proof of which is contained in Section \ref{S2}.

The operator $A_1$, once defined on its natural domain, the closure of $C_c^\infty(C)$ with respect to the norm given by the induced quadratic form (cf. discussion at top of p. 59 in \cite{MNP00}), is known to have purely discrete spectrum with corresponding eigenvalue/eigenfunction pairs given by
\begin{align}
    A_1 e^{\pm i n\varphi} = 2\lambda_ne^{\pm i n\varphi},\quad n\in\mathbb{N}_0:=\mathbb{N}\cup \{0\},\ \varphi\in[0,2\pi),
\end{align}
where $\lambda_n:=2\sum_{j=0}^{n-1}(2j+1)^{-1} = 2H_{2n}-H_n$, and the empty sum for $n = 0$ is taken to be zero. Therefore, the fundamental solution to the above Cauchy problem has the convergent representation
\begin{align}\label{Eq:SpecSer}
    K_1(t,\varphi,\theta) = \frac{1}{2\pi} + \frac{1}{2\pi}\sum_{n \in \Z \setminus \{0\}} e^{in(\varphi-\theta)}e^{-2\lambda_{|n|}t},\quad \varphi,\theta\in[0,2\pi),\ \Re(t)>\frac{1}{2}\delta_{\varphi,\theta}\,\,,
\end{align}
where $\delta_{i,j}$ denotes the Kronecker delta function. This series representation of $K_1(t,\varphi,\theta)$ only converges pointwise for $\Re(t)>\frac{1}{2}$ on the diagonal $\varphi=\theta$ since $2\lambda_{n} = 2\ln(n) + O(1)$ as $n \rightarrow \infty$, and the eigenfunctions have modulus 1, yielding leading behavior $n^{-2t}$ of the summands. The pole which consequently emerges at $t=\frac{1}{2}$ represents a threshold for the compact operator $e^{-tA_1}$ being trace class; in fact, $e^{-tA_1}$ is trace class if and only if $\Re(t)>\frac{1}{2}$. 

We note that our operator $A_1$ is the same, up to a constant shift and a multiplicative factor, as the conformal logarithmic Laplacian on $\mathbb{S}^1$ introduced in \cite{FeS25} which is given by
\begin{align} \mathcal{P}_g^{\textnormal{log}} \, u(z):=\int_{C}  \frac{u(z)-u(y)}{|z-y|}dV_g + 2\psi(\tfrac{1}{2})u(z),
\end{align}
where $g$ is the round Riemannian metric on $\mathbb{S}^1$ and $\psi(\,\cdot\,)$ is the digamma function. This operator arises as the formal derivative at $s =0$ of the conformal fractional Laplacian on the sphere (see \cite{FeS25} for details), and can be studied in dimension $n$, where the spectrum is also known explicitly (the eigenvalues are given explicitly in terms of those of the Laplace-Beltrami operator on $\mathbb{S}^n$ equipped with $g$, and the eigenfunctions are the corresponding spherical harmonics); therefore the higher-dimensional analogues of $A_1$ may be amenable to analysis similar to what is done in this work, but we leave that as a potential future direction for now. 

We also note that the logarithmic Laplacian on all of $\R^n$ is by now well-studied, see for example \cite{CW19} where it was introduced, as well as \cite{LW21} and \cite{CV24}, the latter of which includes an in-depth study of the fundamental solution of the associated Cauchy problem.

In order to state our main results regarding this problem, we first introduce the following notation:

\begin{definition}\label{Def:pj}
Given a sequence of numbers, $S$, indexed over a set $J\supseteq \mathbb{N}$, we define
\begin{equation}\label{Eq:12a}
p_{j,S}(t):=\sum_{k=0}^j \frac{(-1)^k}{k!} \widehat{B}_{j,k}(s_1,\dots,s_{j-k+1})t^k=\frac{(-1)^j}{j!}\det\mathcal{N}_{j,S}(t),\quad j\in\mathbb{N}_0,
\end{equation}
where $\widehat{B}_{n,k}$ denote the \textup{partial ordinary Bell polynomials} (see Section \ref{S:Notation}) and the $j\times j$ lower Hessenberg matrix $\mathcal{N}_{j,S}(t)$ consists of $s_1 t$ on the diagonal, $(k+1)s_{k+1} t$ on the $k$th subdiagonal, the sequence $1,2,\dots, j-1$ on the first superdiagonal, and zeros on all other superdiagonals (cf. \cite[Eq. (5.3)]{OS22}):
\begin{align}\label{Eq:matrix}
    \mathcal{N}_{j,S}(t) = \begin{pmatrix}
        s_1 t && 1 && 0  && \cdots &&\cdots\\ 2s_2t && s_1t&&2&&0 &&\cdots \\ 
        3s_3t && 2s_2t&& s_1 t&& 3 &&\cdots \\ \vdots && &&  && \ddots \\ js_jt && (j-1)s_{j-1}t && \cdots &&  \cdots && s_1 t
    \end{pmatrix}.
\end{align}
\end{definition}

The polynomials $p_{j,S}(t)$ will appear throughout our analysis in our closed-form expressions for the residues. We refer to Sections \ref{S:Bernoulli} and \ref{Sect:poly} for identities  of independent interest involving these polynomials whenever the sequence, $S$, contains Bernoulli numbers.

We can now state our first main result.

\begin{thm}[Analytic structure of $K_1$]\label{Thm:Main3}
    For each $\varphi,\theta\in[0,2\pi)$, $\varphi\neq\theta$, the  solution $K_1(t,\varphi,\theta)$ extends to an entire function of $t$.

    For each $\varphi\in[0,2\pi)$, the solution $K_1(t,\varphi,\varphi)$ extends to a meromorphic function of $t$ with a simple pole at $t=\tfrac{1}{2}$ and  possible simple poles at $t=\frac{1}{2}-m$, $m\in\mathbb{N}$ with corresponding residues given by 
    \begin{equation}
        \frac{e^{-2\ln(2)-\gamma_E}}{2\pi}e^{[4\ln(2)+2\gamma_E] m}p_{2m,S_1}\big(\tfrac{1}{2}-m\big),\quad m\in\mathbb{N}_0,
    \end{equation}
    where $\gamma_E$ denotes the Euler-Mascheroni constant and, denoting by $B_\ell$ the Bernoulli numbers with $B_1=-1/2$ and by $B_k(s)$ the Bernoulli polynomials, the sequence $S_1$ satisfies
\begin{align}\label{Eq:S1}
s_k^{(1)}=(1-2^{1-k})\frac{2 B_{k}}{k}=-\frac{2}{k}B_k(1/2),\quad k\in\mathbb{N}.
\end{align} 

In particular, $t=0$ is a regular point of $K_1(t,\varphi,\theta)$ for all $\varphi,\theta\in[0,2\pi)$.
\end{thm}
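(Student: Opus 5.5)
Everything rests on rewriting the eigenvalues through the digamma function. Since $\psi(x+1)=\psi(x)+1/x$, we have $\psi(n+\tfrac12)-\psi(\tfrac12)=\sum_{j=0}^{n-1}(j+\tfrac12)^{-1}=\lambda_n$. Combined with $\psi(\tfrac12)=-\gamma_E-2\ln 2$, this gives
\begin{equation*}
e^{-2\lambda_n t}=e^{-[4\ln(2)+2\gamma_E]t}\,e^{-2t\psi(n+\frac12)},\qquad n\in\N .
\end{equation*}
The plan is to expand $e^{-2t\psi(n+1/2)}$ asymptotically in $n$ and compare term by term with Riemann zeta functions (on the diagonal) or polylogarithms (off the diagonal).

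\textbf{Asymptotic expansion.} The classical expansion $\psi(x+a)\sim\ln x-\sum_{k\ge1}\frac{(-1)^kB_k(a)}{k\,x^k}$ at $a=\tfrac12$ reads $\psi(n+\tfrac12)\sim \ln n+\tfrac12\sum_{k\ge1}s_k^{(1)}n^{-k}$, using $B_k(\tfrac12)=0$ for odd $k$ and \eqref{Eq:S1}. Hence, formally,
\begin{equation*}
e^{-2t\psi(n+\frac12)}\sim n^{-2t}\exp\Big(-t\sum_{k\ge1}s^{(1)}_k n^{-k}\Big)=n^{-2t}\sum_{j\ge0}p_{j,S_1}(t)\,n^{-j}.
\end{equation*}
The last equality is just the generating-function identity for partial ordinary Bell polynomials, $\exp(-t\sum_k s_kx^k)=\sum_j\sum_k\frac{(-t)^k}{k!}\widehat B_{j,k}(s_1,\dots)x^j$, which is Definition \ref{Def:pj}.

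Fix $N\in\N$ and write
\begin{equation*}
e^{-2t\psi(n+\frac12)}=n^{-2t}\sum_{j=0}^{N}p_{j,S_1}(t)n^{-j}+R_N(t,n).
\end{equation*}
I will need the bound $|R_N(t,n)|\le C_{N,T}\,n^{-2\Re t-N-1}$ uniformly for $|t|\le T$. Each $R_N(\cdot,n)$ is entire in $t$.

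\textbf{Main obstacle: the remainder estimate.} The series for $\psi$ is only asymptotic, not convergent, so the bound on $R_N$ is the step I expect to be hardest. I would start from the standard remainder estimate $\psi(n+\tfrac12)=\ln n+\tfrac12\sum_{k=1}^{N}s_k^{(1)}n^{-k}+\rho_N(n)$ with $|\rho_N(n)|\le C_N n^{-N-1}$, obtained from Euler--Maclaurin or the integral representation of $\psi$. Write $g_N(n)=\sum_{k\le N}s_k^{(1)}n^{-k}$, a bounded quantity. Exponentiating, $e^{-2t\psi}=n^{-2t}e^{-tg_N(n)}e^{-2t\rho_N(n)}$. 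Since $|t\rho_N|$ is small, $e^{-2t\rho_N}=1+O(|t|n^{-N-1})$. Next, $e^{-tg_N(n)}$ is an entire function of the single variable $x=1/n$ on a bounded set. Its Taylor polynomial of degree $N$ in $x$ coincides with $\sum_{j\le N}p_{j,S_1}(t)x^j$, because the terms $s_k$ with $k>N$ cannot affect coefficients of degree $\le N$. The Taylor remainder is therefore $O(n^{-N-1})$, uniformly for $|t|\le T$. Together these give the claimed bound on $R_N$.

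\textbf{Assembling the continuation.} The remainder bound makes $\sum_{n\ge1}e^{in(\varphi-\theta)}R_N(t,n)$, together with its $n\mapsto-n$ counterpart, converge absolutely and locally uniformly on $\Re t>-N/2$, so it is holomorphic there. For the main terms, \eqref{Eq:SpecSer} splits as
\begin{equation*}
K_1=\frac1{2\pi}+\frac{e^{-[4\ln 2+2\gamma_E]t}}{2\pi}\sum_{j=0}^N p_{j,S_1}(t)\big[\Li_{2t+j}(e^{i(\varphi-\theta)})+\Li_{2t+j}(e^{-i(\varphi-\theta)})\big]+(\text{holomorphic on }\Re t>-N/2).
\end{equation*}

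\emph{Off the diagonal} ($\varphi\neq\theta$), $\Li_s(e^{ix})$ with $x\notin2\pi\Z$ is entire in $s$. Since $N$ is arbitrary, $K_1$ is entire in $t$.

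\emph{On the diagonal}, each bracket equals $2\zeta(2t+j)$. This has a single simple pole at $t=\frac{1-j}{2}$ with residue $2\cdot\tfrac12=1$. Because $s_1^{(1)}=-2B_1(\tfrac12)=0$ and $s^{(1)}_k=0$ for all odd $k$, the polynomial $p_{j,S_1}$ vanishes identically for odd $j$. So only $j=2m$ contributes, with a pole at $t=\tfrac12-m$. Evaluating the prefactor there gives exactly the stated residue
\begin{equation*}
\frac{e^{-2\ln 2-\gamma_E}}{2\pi}e^{[4\ln 2+2\gamma_E]m}p_{2m,S_1}(\tfrac12-m).
\end{equation*}
For $m=0$ we have $p_0=1$, so the pole at $t=\tfrac12$ is genuine; the poles at $t=\tfrac12-m$ with $m\ge1$ occur unless $p_{2m,S_1}(\tfrac12-m)$ vanishes, which is why they are only "possible". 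Finally, $t=0$ is never of the form $\tfrac12-m$, so it is a regular point in every case.
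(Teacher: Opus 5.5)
Your proof is correct and follows essentially the same route as the paper. Both arguments expand $e^{-2\lambda_n t}$ through the Bell-polynomial generating function with a remainder that is uniform for $t$ in compact sets (the paper's Lemma \ref{Lem:GenAsymp}), sum to obtain $\zeta_R(2t+j)$ on the diagonal and $\Li_{2t+j}(e^{\pm i(\varphi-\theta)})$ off it, and discard the odd-$j$ terms because $s^{(1)}_{2k-1}=0$ (Lemma \ref{Lem:Pjident}). The only cosmetic difference is that you obtain the eigenvalue asymptotics from $\lambda_n=\psi(n+\tfrac12)-\psi(\tfrac12)$, whereas the paper uses $2\lambda_n=2(2H_{2n}-H_n)$; your route is exactly the identification $S_1=2S_2(1/2)$ that the paper records in Remark \ref{Remark2}.
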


We conjecture that $K_1(t,\varphi,\varphi)$ in fact has simple poles exactly at $t=\frac{1}{2}-m$, $m\in\mathbb{N}_0$, which reduces to showing the following, readily verified for small $m$ (see Remark \ref{Remark2} for equivalent reformulations):

\begin{prob}\label{Problem1}
Show that $p_{2m,S_1}(\tfrac{1}{2}-m)\neq0$ for all $m\in\mathbb{N}$ where the sequence $S_1$ satisfies \eqref{Eq:S1}. 
\end{prob}

\bigskip

We now turn to the second Cauchy problem we are interested in, $\partial_t u(t,x)=-A_2u(t,x)$,
where $A_2$ is the following integral operator considered in Section 12.5.4 of \cite{MNP00} (see also \cite{T64} where this integral operator seems to have first been considered in the context of slender-body potential theory, and the discussion in Section 5.3 of the more recent paper \cite{GM25})
\begin{align}
    A_2u(t,x):=\frac{1}{2}\int_{-1}^1 \frac{u(t,x)-u(t,z)}{|x-z|}\,dz.
\end{align}
Properties of the operator $A_1$ were needed in Section 12.2 of \cite{MNP00} in the construction of a boundary layer for the solution to the Poisson equation in the exterior of a tube in $\R^3$. Similar considerations in Section 12.5 of that reference led the authors to study $A_2$, which suggests going through the same analysis for this operator. We also note that, like $A_1$, the operator $A_2$ may be obtained as a formal derivative at $s=0$ of a fractional Laplacian-type operator on the interval $(-1,1)$ (in the parlance of \cite{GS19} a `regional' fractional Laplacian on the interval), that is, an operator of the form $c_s\int_{-1}^1 \frac{u(x)-u(z)}{|x-z|^{1+2s}}dz$ for an appropriate normalization constant $c_s$; or, as the logarithmic Dirichlet Laplacian on the interval $[-1,1]$, thought of as a metric-measure space, as in \cite{GM25}.

If the domain of $A_2$ is taken to be the closure of $C_c^\infty[-1,1]$ in the norm given by the induced quadratic form, then $A_2$ is known to have purely discrete spectrum with corresponding eigenvalue/eigenfunction pairs given by
\begin{align}
    A_2 P_n(x)=H_nP_n(x),\quad n\in\mathbb{N}_0,\ x\in[-1,1],
\end{align}
where $P_n(x)$ denotes the $n$th Legendre polynomial and $H_0 := 0$ once again. Therefore, the fundamental solution to the above Cauchy problem has the representation
\begin{align}\label{Eq:SpecSer2}
    K_2(t,x,y) = \frac{1}{2}+\sum_{n\in\mathbb{N}} \frac{2n+1}{2}P_n(x)P_n(y)e^{-tH_n},\quad x,y\in[-1,1],\ \Re(t)>0 \text{ sufficiently large}.
\end{align}
The exact region of convergence for $t$ can have lower bound $2,1,1/2,$ or $0$ depending on the choices of $x,y$:
\begin{equation}
\Re(t)>1+\delta_{x,y}\delta_{x,\pm1}-\tfrac{1}{2}(1-\delta_{x,\pm1}+1-\delta_{y,\pm1})(1-\delta_{x,y}),
\end{equation}
where $\delta_{i,j}$ denotes the Kronecker delta function and the region of convergence follows similarly to before by taking into account the leading term and oscillations of Hilb's formula for the asymptotics of $P_n(x)$ (cf. Thm. 8.21.2 in \cite{Sz39}).

Similarly to before, the threshold of the compact operator $e^{-t A_2}$ being trace class is related to the pointwise divergence in $t$ of the spectral series representation \eqref{Eq:SpecSer2} on the diagonal, but the relationship is a bit more subtle now due to the behavior of the normalized eigenfunctions at particular values of $x,y$ in the series which is not accounted for when considering the semigroup. In particular, one has that $e^{-t A_2}$ is trace class if and only if $\Re(t)>1$ (cf. \cite[Prop. 5.8]{GM25} noting our $A_2$ is $\frac{1}{2}$ times their integral operator), meaning the range coincides with the pointwise convergence of the series on the diagonal except for the edge cases $x=\pm1$.

The analytic continuation of this expression in $t$ is quite interesting and much more nontrivial than our first example, even though the result can be summarized similarly.
Moreover, it is in the same ways ``reminiscent'' of the previously mentioned modified Riemann zeta function given in Maz'ya's original problem statement.
As the resulting equations are much lengthier in this case, we include two theorems: the first considers the analytic structure while the second includes the residues explicitly.

\begin{thm}[Analytic structure of $K_2$]\label{Thm:Main}
    For each $x,y \in [-1,1]$, $x\neq y$, the solution $K_2(t,x,y)$ extends to an entire function of $t$. 

    For each $x \in [-1,1]$ the solution $K_2(t,x,x)$ extends to a meromorphic function of $t$ with a simple pole at $t=1+\delta_{x,\pm1}$ and additional possible simple poles at $t=-(1+\delta_{x,\pm1})m$, $m\in\mathbb{N}$.

    In particular, $t=0$ is a regular point of $K_2(t,x,y)$ for all $x,y\in[-1,1]$.
\end{thm}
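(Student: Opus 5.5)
The approach is to split $e^{-tH_n}$ into a power of $n+\tfrac12$ times an explicit asymptotic series, and to split $\tfrac{2n+1}{2}P_n(x)P_n(y)$ into non-oscillatory and oscillatory parts. Each resulting piece is then summed by a Hurwitz zeta function or a (Hurwitz-)Lerch/polylogarithm function.

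\textbf{Step 1: expansion of the eigenvalues.} Write $N:=n+\tfrac12$ and use $H_n=\psi(n+1)+\gamma_E$ with $\psi(n+1)=\psi(N+\tfrac12)$. The classical expansion
\begin{equation*}
\psi(N+\tfrac12)\sim \ln N+\sum_{k\ge1}\frac{B_{2k}(1/2)}{2k}N^{-2k}
\end{equation*}
contains only even powers of $N^{-1}$. Exponentiating with partial ordinary Bell polynomials, as in Definition \ref{Def:pj}, gives for every $J$
\begin{equation*}
e^{-tH_n}=e^{-\gamma_E t}N^{-t}\Big(\sum_{k=0}^{J}q_k(t)N^{-2k}+R_J(t,n)\Big),\qquad q_0=1,
\end{equation*}
where $q_k(t)$ is a polynomial divisible by $t$ for $k\ge1$, and $R_J=O(N^{-2J-2})$ locally uniformly in $t$. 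The remainder term, summed against coefficients of polynomial growth, is holomorphic in $\Re t>C-2J$. Since $J$ is arbitrary, all analytic information sits in the finitely many main terms.

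\textbf{Step 2: the endpoint case $x=y=\pm1$.} Here $P_n(\pm1)^2=1$, so the series becomes
\begin{equation*}
\tfrac12+e^{-\gamma_E t}\sum_k q_k(t)\,\zeta(t+2k-1,\tfrac32)+\text{(holomorphic)}.
\end{equation*}
The Hurwitz zeta function has a single simple pole, at $t=2-2k$. For $k=0$ this gives the pole at $t=2$. For $k=1$ the would-be pole at $t=0$ is cancelled by the factor $t$ in $q_1(t)$. For $k\ge2$ we get possible poles at $t=-2m$, $m\ge1$, with residues $e^{2\gamma_E m}q_{m+1}(-2m)$ up to sign. This is exactly the $\delta_{x,\pm1}=1$ case of Theorem \ref{Thm:Main}.

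\textbf{Step 3: off-diagonal and interior points.} Write $x=\cos\alpha$, $y=\cos\beta$. I will use the complete Stieltjes/Darboux asymptotic expansion
\begin{equation*}
P_n(\cos\alpha)\sim\sum_j c_j(\alpha)N^{-1/2-j}\cos\big(N\alpha-\tfrac{\pi}{4}-\tfrac{j\pi}{2}-\tfrac{j\alpha}{2}\big),
\end{equation*}
valid for $\alpha\in(0,\pi)$ with an explicit remainder. When exactly one of $x,y$ is $\pm1$, only one factor oscillates. Multiplying out turns $\tfrac{2n+1}{2}P_n(x)P_n(y)e^{-tH_n}$ into finitely many terms of the form $N^{-s}e^{iN\omega}$ plus a remainder that is holomorphic in a half-plane shifting left as the truncation order grows. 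The frequencies are $\omega\in\{\pm\alpha\pm\beta\}$ or $\{\pm\alpha\}$.

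Each sum $\sum_n N^{-s}e^{iN\omega}$ is a Lerch transcendent $e^{i\omega/2}\Phi(e^{i\omega},s,\tfrac12)$, which is entire in $s$ whenever $\omega\notin2\pi\mathbb Z$. For $x\ne y$ every frequency is nonzero mod $2\pi$, because $\alpha,\beta\in[0,\pi]$ and $\alpha\ne\beta$. This proves that $K_2(t,x,y)$ is entire off the diagonal.

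On the interior diagonal $\alpha=\beta\in(0,\pi)$, the frequencies $\pm2\alpha$ still give entire contributions. The frequency $0$ produces a non-oscillatory amplitude $a_n(\alpha)\sim\sum_{\ell\ge0}d_\ell(\alpha)N^{-\ell}$, with $d_0=(\pi\sin\alpha)^{-1}$, which follows from $2N\cdot\tfrac{2}{\pi N\sin\alpha}\cos^2(\cdot)$ averaged. The zero-frequency part is therefore $\sum_{k,\ell}q_k(t)d_\ell\,\zeta(t+2k+\ell,\tfrac32)$. Its poles lie at $t=1-2k-\ell$, giving the pole at $t=1$ and possible poles at $t=-m$.

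\textbf{Main obstacle: regularity at $t=0$ on the interior diagonal.} The residue at $t=0$ equals $d_1(\alpha)$, since $q_k(0)=0$ for $k\ge1$, so I must show $d_1\equiv0$. I would first try to prove that the non-oscillatory part of $(2n+1)P_n(\cos\alpha)^2$ has an expansion in even powers of $N^{-1}$ only. One route is the Legendre--Bessel (Hilb/Olver uniform) expansion in the variable $N$, whose coefficients have definite parity. Failing that, I would compute $d_1$ directly from the first two Darboux coefficients $c_0,c_1$ and check that the cross term contributes only oscillatory pieces.

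A useful consistency check is that at $t=0$ the series $\sum\tfrac{2n+1}{2}P_n(x)P_n(y)$ is the delta kernel. Its Hurwitz regularization on the diagonal should therefore be finite, which is consistent with the vanishing of $d_1$.
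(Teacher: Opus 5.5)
Your architecture is sound and differs from the paper's. The paper never expands $P_n$. It writes $n^{-w}$ as a Mellin integral, sums $\sum_n P_n(x)P_n(y)e^{-ns}$ in closed form with Maximon's formula, and reads the poles off the small-$s$ behaviour of the resulting ${}_2F_1$: a power series off the diagonal, an extra $\ln s$ on it. It also expands in $n$ rather than $N$, which is why it needs \eqref{Eq:75} and Corollary~\ref{cor1} to kill spurious poles at $x=y=\pm1$.

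Your shift to $N=n+\frac12$ makes the eigenvalue expansion even, so your Step~2 is cleaner than the paper's argument. The sign should be $\psi(N+\tfrac12)\sim\ln N-\sum_k\frac{B_{2k}(1/2)}{2k}N^{-2k}$, but this is irrelevant. Your Lerch/Hurwitz bookkeeping in Step~3 does prove the off-diagonal claim. Two trivial points: $x=-y=\pm1$ is simply frequency $\pi$, and the $n=0$ term has to be removed from $\Phi$. The price of your route is that you need a full asymptotic expansion of $P_n(\cos\alpha)$ with remainder bounds, which is heavier input than Maximon's formula.

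The gap is exactly the step you flag. You never show $d_1(\alpha)=0$. Without it, $t=0$ remains a possible pole of $K_2(t,x,x)$ for $x\in(-1,1)$. So neither ``extra poles only at $t=-m$, $m\in\mathbb{N}$'' nor the final assertion is proved; the delta-kernel remark is only a heuristic.

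Your fallback would also fail as written. The $\alpha$-dependent phase $-j\alpha/2$ in your Darboux formula is incorrect, and with it the $j=0,1$ cross term has mean proportional to $\sin(\alpha/2)\neq0$. In the classical Stieltjes form the phases are $(N+k)\alpha-(k+\tfrac12)\tfrac{\pi}{2}$ and the prefactor is $\tfrac{2}{\sqrt\pi}\Gamma(n+1)/\Gamma(n+\tfrac32)$. There the cross term does carry a mean, $\tfrac{1}{4N}$ relative to the leading term, and it is cancelled by the $-\tfrac{1}{4N}$ coming from the squared Gamma ratio. Re-expanded in $N$ this reads
\begin{equation*}
P_n(\cos\alpha)=\sqrt{\tfrac{2}{\pi N\sin\alpha}}\Big[\cos\big(N\alpha-\tfrac{\pi}{4}\big)+\tfrac{\cot\alpha}{8N}\sin\big(N\alpha-\tfrac{\pi}{4}\big)+O(N^{-2})\Big],
\end{equation*}
so the mean of $NP_n(\cos\alpha)^2$ is $\frac{1}{\pi\sin\alpha}\big(1+O(N^{-2})\big)$, i.e.\ $d_1=0$. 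In the paper the same cancellation appears, in the variable $n$, as $-\frac{2}{\pi}c_1(\alpha)+\frac{1}{\pi}c_0(\alpha)=0$.

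Your first route closes the gap to all orders. Olver's Bessel-type expansion of $P_n(\cos\alpha)$ is a series in $N^{-2}$ multiplying $J_0(N\alpha)$ and $N^{-1}J_1(N\alpha)$, with no $N$-dependent normalisation since $J_0(0)=1$. Combined with Hankel's expansions it gives $P_n(\cos\alpha)\sim\sqrt{2/(\pi N\sin\alpha)}\sum_j a_j(\alpha)N^{-j}\cos(N\alpha-\frac{\pi}{4}-\frac{j\pi}{2})$ with real $a_j$. Every product with $j+j'$ odd then has zero mean, so all odd $d_\ell$ vanish. This proves more than the theorem: the residues of $K_2(t,x,x)$ at $t=0,-2,-4,\dots$ all vanish, which is what Open Problem~\ref{Problem2}$(iv)$ conjectures.
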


That our theorems show that both $K_1(t,\varphi,\theta)$ and $K_2(t,x,y)$ are in fact entire in $t$ for all fixed spatial variable values off the diagonal may seem surprising at first glance from the series representations; however, it is not altogether surprising due to the oscillatory nature of the corresponding eigenfunctions. 
For instance, on the diagonal for $K_1$ as well as for the cases $x=y=\pm1$ for $K_2$, one obtains meromorphic extensions comprising of shifted Riemann zeta functions (cf. \eqref{Eq:29a}, \eqref{Eq:Mero}) plus entire remainder terms. One should contrast this to the off-diagonal for $K_1$ and the cases $x=-y=\pm1$ for $K_2$ which yield a similar expression, where the Riemann zeta function is replaced with the polylogarithm and the Dirichlet eta function, respectively (cf. \eqref{Eq:29aa}, \eqref{Eq:Ent}). Consequently, in these cases the functions are entire in $t$ simply due to the alternation introduced in the summation. When $x\neq y$ for $K_2$, the oscillatory nature of the Legendre polynomial terms as $n\to\infty$ should be expected to yield a similar structure to that of the previous case, that is, the function should be entire.

Finally, we point out that while one often expects a meromorphic continuation to exist in such contexts, that is not at all guaranteed. For instance, we refer the interested reader to the recent papers \cite{FPS25,FS25} of one of the authors regarding spectral $\zeta$-functions for examples where such series related to the spectrum of certain Schr\"odinger operators with Bessel, Legendre, and P\"oschl--Teller potentials do not admit meromorphic extensions. These examples instead have logarithmic branch points including infinitely many in the latter two cases (cf. \cite[Sect. 5.2]{FPS25} and \cite[Thm. 1.1]{FS25}, resp.).

We conclude this part of the introduction by discussing the residues at the poles in Theorem \ref{Thm:Main}.

\begin{thm}[Residues of $K_2$]\label{Thm:Main2}
    For $x=y=\pm1$, the simple pole at $t=2$ has residue $e^{-2\gamma_E}$ while the other possible simple poles at $t=-2m$, $m\in\mathbb{N}$ have residues given by (see Def. \ref{Def:pj}) $e^{2m\gamma_E}p_{2+2m,S_2}(-2m)$
    where the sequence $S_2$ satisfies $s_k^{(2)}=-B_k/k,$ $k\in\mathbb{N}$.

For $x=y$, $x\neq\pm1$, letting $x=\cos(\alpha),\ \alpha\in(0,\pi)$, the simple pole at $t=1$ has residue $e^{-\gamma_E}\pi^{-1}\csc(\alpha)$,
whereas the residues at the other possible simple poles $t=-m$, $m\in\mathbb{N}$, are given by
\begin{align}\label{Eq:GeneralResidues}
\frac{2e^{\gamma_E m}}{\pi}\bigg[&\sum_{\ell=1}^{m+1}(-1)^{\ell}\ell! p_{m+1-\ell,S_2}(-m)\sum_{k=0}^{\lfloor\ell/2\rfloor} \frac{(1/2)_k^2}{(k!)^2}b_{\ell-2k,k}(\alpha)\\
&+\frac{1}{2}\sum_{\ell=0}^{m}(-1)^{\ell}\ell! p_{m-\ell,S_2}(-m)\sum_{k=0}^{\lfloor\ell/2\rfloor} \frac{(1/2)_k^2}{(k!)^2}b_{\ell-2k,k}(\alpha)\bigg],\notag
\end{align}
where $(a)_k$ denotes Pochhammer's symbol, $S_2$ is the same as before, and
\begin{align}
&b_{i,k}(\alpha)=\sum_{j=0}^{\lfloor i/2\rfloor} d_{k+j,k}(\alpha)c_{i-2j}(\alpha),\quad i,k\in\mathbb{N}_0,\quad c_0(\alpha)=\frac{1}{2\sin(\alpha)},\notag\\
&c_j(\alpha)=\begin{cases}
\sin(\alpha)\big(-[i\cot(2\alpha)]^{\frac{1+(-1)^j}{2}}\frac{1}{j!}\Li_{-j}\big(e^{2i\alpha}\big)(\alpha)-\sum_{n=1}^{j-1}c_n(\alpha) c_{j-n}(\alpha)\big),& \alpha\in(0,\pi)\backslash\{\tfrac{\pi}{2}\},\\[2mm]
\frac{(2^{j+1}-1)B_{j+1}}{(j+1)!},& \alpha=\tfrac{\pi}{2},
\end{cases}\\
&d_{j,0}(\alpha)=\delta_{j,0},\quad d_{j,k}(\alpha)=\frac{1}{(2j)!}\sum_{\ell=k}^{j}\binom{\ell-1}{k-1}\frac{(-1)^{\ell-k}}{4^\ell \sin^{2\ell}(\alpha)}
\sum_{m=0}^{2\ell}
(-1)^m
\binom{2\ell}{m}
(\ell-m)^{2j},\ j\geq k\geq 1,\notag
\end{align}
with $\Li_s(z)$ denoting the polylogarithm function -- see Remark \ref{Rem:sqrt} and Appendix \ref{AppPoly} for alternate representations of $c_k(\alpha)$ and $d_{j,k}(\alpha)$, respectively.
\end{thm}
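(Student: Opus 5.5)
The approach is to write $e^{-tH_n}$ as $e^{-\gamma_E t}n^{-t}e^{-th_n}$ and expand $e^{-th_n}$ asymptotically in powers of $1/n$. Using $H_n=\psi(n+1)+\gamma_E$ and the Stirling-type expansion $\psi(n+1)=\ln n+\frac{1}{2n}-\sum_{k\ge1}\frac{B_{2k}}{2k\,n^{2k}}$, we can write
\[
h_n\sim\sum_{k\ge1}\frac{s^{(2)}_k}{n^k}\quad\text{with } s^{(2)}_k=-B_k/k,
\]
since $B_1=-1/2$ gives $s_1=1/2$. Exponentiating $-t\sum_k s_k n^{-k}$ with ordinary Bell polynomials gives $e^{-th_n}=\sum_{j=0}^{J}p_{j,S_2}(t)n^{-j}+R_J(t,n)$, where $R_J=O(n^{-J-1})$ locally uniformly in $t$. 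This is exactly Definition \ref{Def:pj}; the determinant form comes from the standard recursion for exponentials of power series. Hence $\sum_n a_n n^{-t}e^{-th_n}$ splits into finitely many Dirichlet series $\sum_n a_n n^{-t-j}$ plus a remainder analytic in a half-plane that moves left as $J\to\infty$.

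\textbf{Case $x=y=\pm1$.} Here $P_n(\pm1)^2=1$, so the series is $e^{-\gamma_E t}\sum_n (n+\tfrac12)n^{-t}e^{-th_n}$. Substituting the expansion gives
\[
e^{-\gamma_E t}\sum_j p_{j,S_2}(t)\bigl[\zeta(t+j-1)+\tfrac12\zeta(t+j)\bigr].
\]
Poles of $\zeta(t+j-1)$ sit at $t=2-j$ and those of $\tfrac12\zeta(t+j)$ at $t=1-j$. Collecting the total residue at $t=2-j'$ gives
\[
e^{-\gamma_E t}\bigl[p_{j',S_2}(t)+\tfrac12p_{j'-1,S_2}(t)\bigr]\Big|_{t=2-j'}.
\]
The stated formula, with residue $e^{2m\gamma_E}p_{2+2m,S_2}(-2m)$ at $t=-2m$ and $e^{-2\gamma_E}$ at $t=2$ (since $p_0=1$), would follow from two identities:
\begin{enumerate}[(i)]
\item at odd-index points $t=1-2m$ the combination vanishes;
\item at $t=-2m$ the term $\tfrac12 p_{2m+1}(-2m)$ is zero.
\end{enumerate}
Both should come from the Bernoulli structure: $e^{-t(\psi(n+1)-\ln n)}$ is, via the multiplication/reflection symmetry of $\Gamma$, related to $n^{t}\Gamma(n+1)^{...}$-type functions whose expansion in $n+\tfrac12$ is even. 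Concretely, I would use $\psi(n+1)-\ln(n+\tfrac12)$, which is odd in $1/(n+\tfrac12)$ up to Bernoulli terms $B_{2k}(1/2)$. Re-expanding in $n+\tfrac12$ makes the sum $\sum (n+\tfrac12)^{1-t-2k}$, i.e.\ Hurwitz zetas $\zeta(\cdot,\tfrac32)$ with only even shifts. This simultaneously explains the pole set $\{2,-2,-4,\dots\}$ and lets me read off the residue, then translate back to $p_{\cdot,S_2}$.

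\textbf{Case $x=y=\cos\alpha$, $\alpha\in(0,\pi)$.} I would use Hilb/Darboux-type full asymptotic expansions of $P_n(\cos\alpha)^2$. Writing $P_n^2=\frac{1}{\pi}\sum$ via the integral/generating representation $P_n(\cos\alpha)^2$, the expansion splits into
\begin{enumerate}[(a)]
\item a non-oscillating part $\frac{1}{\pi\sin\alpha}\sum_\ell(\cdots)n^{-\ell-1}$, whose coefficients are the $(1/2)_k^2/(k!)^2$ and $b_{i,k}$ terms;
\item oscillating parts $e^{\pm2i(n+1/2)\alpha}n^{-\ell-1}$, which produce polylogarithms $\Li_{t+\cdot}(e^{\pm2i\alpha})$, entire in $t$.
\end{enumerate}
The non-oscillating coefficients come from squaring the Laplace–Heine series $\sum_k \frac{(1/2)_k^2}{k!(n+3/2)_k}\frac{\cos((n+k+1/2)\alpha-(k+1/2)\pi/2)}{(2\sin\alpha)^{k+1/2}}$ and re-expanding $(n+3/2)_k^{-1}$ in $1/n$. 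The $d_{j,k}$ (central factorial numbers) and $c_j$ encode this re-expansion; the $c_j$ recursion involving $\Li_{-j}(e^{2i\alpha})$ comes from the Abel-summed constant parts of the oscillating products. Multiplying by $(2n+1)/2$ and by $e^{-tH_n}=e^{-\gamma_Et}\sum_j p_{j,S_2}(t)n^{-t-j}$, the pole at $t=-m$ collects the $n^{-1-t-m}$ coefficient. This yields the two sums in \eqref{Eq:GeneralResidues}, one from the $n$ factor and one from the $\tfrac12$. The leading pole at $t=1$ gives $\frac{2}{\pi}\cdot\frac{1}{2\sin\alpha}e^{-\gamma_E}$, as claimed.

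\textbf{Main obstacle.} The hard step is obtaining a full asymptotic expansion of $P_n(\cos\alpha)^2$ in $n$ with controlled remainder uniform enough to sum, and organizing its non-oscillating coefficients into the $b_{i,k}$ form. I would first try Stieltjes' convergent series for $P_n(\cos\alpha)$ for $\alpha\in(\pi/6,5\pi/6)$, extending by analyticity in $\alpha$, with a separate check at $\alpha=\pi/2$ that matches the stated $c_j$.
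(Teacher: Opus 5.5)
There is a genuine gap in the general diagonal case $x=y=\cos\alpha$, and it is the explicit formula itself that is never derived. Your Stieltjes/Laplace--Heine route does work for the analytic structure: the oscillating terms $e^{\pm i(2n+1)\alpha}n^{-\ell-1}$ only produce $\Li_{t+\cdot}(e^{\pm 2i\alpha})$, which are entire. It also expresses the residue at $t=-m$ through the non-oscillating coefficients $a_\ell(\alpha)$ in $P_n(\cos\alpha)^2\sim\sum_\ell a_\ell(\alpha)n^{-\ell-1}+(\text{oscillating})$. But the statement needs $a_\ell(\alpha)=\frac{2}{\pi}(-1)^\ell\ell!\sum_{k\le\ell/2}\frac{(1/2)_k^2}{(k!)^2}b_{\ell-2k,k}(\alpha)$, and your sentence ``the $d_{j,k}$ and $c_j$ encode this re-expansion'' is exactly the unproved step. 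Squaring Stieltjes' series gives double sums over $k,k'$ weighted by $\cos((k-k')(\alpha-\frac{\pi}{2}))(2\sin\alpha)^{-k-k'-1}$, together with re-expansions of $\Gamma(n+1)^2/\Gamma(n+\frac32)^2$ and $1/(n+\frac32)_k$. Nothing there visibly produces the Taylor coefficients $c_j$ of $Q_\alpha(s)$ or $d_{j,k}$ of $(1-H_{\alpha,\alpha}(s))^k$. Your attribution of the $c_j$ to ``Abel-summed constant parts of the oscillating products'' is also wrong, since oscillating terms cannot affect residues at all. The missing idea is the paper's. Write $n^{-w}=\Gamma(w)^{-1}\int_0^\infty s^{w-1}e^{-ns}\,ds$, sum against Maximon's bilinear generating function $\sum_n P_n(\cos\alpha)^2e^{-ns}=Q_\alpha(s)\,{}_2F_1(\frac12,\frac12;1;H_{\alpha,\alpha}(s))$, and expand with the logarithmic connection formula as $H_{\alpha,\alpha}(s)\uparrow 1$. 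Then $(1/2)_k^2/(k!)^2$ are the connection coefficients, and $c_j$ and $d_{j,k}$ are literally Taylor coefficients at $s=0$. Each $s^{w-1+\ell}\ln s$ term gives $-(w+\ell)^{-2}$, which $1/\Gamma(w)$ reduces to a simple pole with factor $(-1)^\ell\ell!$. An Abelian matching of the $s^\ell\ln s$ terms of this generating function with your $n^{-\ell-1}$ terms would close your gap, but it needs the same formula.

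Your claim that the bookkeeping ``yields the two sums'' also skips the index count, and doing it carefully does not reproduce the display. Collecting $n^{-1-t-m}$ from $(n+\frac12)\sum_\ell a_\ell n^{-\ell-1}\cdot e^{-\gamma_E t}\sum_j p_{j,S_2}(t)n^{-t-j}$ gives a first sum over $\ell=0,\dots,m+1$, i.e.\ an extra term $\frac{2e^{\gamma_E m}}{\pi}p_{m+1,S_2}(-m)c_0(\alpha)$. This term vanishes for even $m$, since $p_{j,S_2}(1-j)=0$ for odd $j$, but not for odd $m$. At $\alpha=\frac{\pi}{2}$, $m=1$, using $P_{2k}(0)^2=\Gamma(k+\frac12)^2/(\pi\Gamma(k+1)^2)$ one gets residue $-e^{\gamma_E}/(12\pi)$, whereas the displayed formula gives $-e^{\gamma_E}/(8\pi)$. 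The paper's proof makes the same omission: it keeps the $\ell=0$ pole of $\widetilde h_{j-1}$ only for $j=0$. So, as printed, the first sum should start at $\ell=0$.

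For $x=y=\pm1$, your half-integer shift is a legitimate and rather neat alternative to the paper's propositions on $p_{j,S_2(s)}$. Note that $\psi(n+1)-\ln(n+\frac12)$ is even, not odd, in $1/(n+\frac12)$. Matching the Hurwitz and $\zeta_R$ representations gives your identity (i) for free. Translating back to get (ii) needs one more step, for example:
\begin{enumerate}[(a)]
\item the same comparison for $\sum_n e^{-tH_n}$, which forces $p_{j,S_2}(1-j)=0$ for odd $j$; or
\item the paper's relation $\partial_s p_{j,S_2(s)}(t)=(1-j-t)p_{j-1,S_2(s)}(t)$, combined with vanishing at $s=\frac12$.
\end{enumerate}
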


While the formulas above appear somewhat complicated, they can readily be calculated for small values of $m$ by means of a simple computer algebra program. Doing so, one is led to the following conjectures, which we pose as open problems. In particular, we conjecture that $K_2(t,\pm 1, \pm 1)$ has simple poles precisely at $t=2$ and $t=-2m$, $m\in\mathbb{N}$, and that in the more general case when $x=y\neq\pm1$, $K_2(t,x,x)$ has simple poles precisely at $t=-(2j-1)$, $j\in\mathbb{N}_0$ with corresponding residues given by the first line in \eqref{Eq:GeneralResidues}.

\begin{prob}\label{Problem2}
Consider the notation of Theorem \ref{Thm:Main2}. Then the following are conjectured to be true:\\[1mm]
$(i)$ $p_{2+2m,S_2}(-2m)\neq0$ for all $m\in\mathbb{N}.$\\[1mm]
$(ii)$ The second line in \eqref{Eq:GeneralResidues} is identically zero (resp., nonzero) for odd (resp., even) $m\geq0$ for all $\alpha\in(0,\pi)$.\\[1mm]
$(iii)$ The first line in \eqref{Eq:GeneralResidues} is nonzero for every $\alpha\in(0,\pi)$ whenever $m\in\mathbb{N}_0$.\\[1mm]
$(iv)$ For every $\alpha\in(0,\pi)$, \eqref{Eq:GeneralResidues} is equal to zero whenever $m\geq0$ is even.
\end{prob}

\begin{remark}
Notice that $(ii)$--$(iv)$ are natural generalizations of $(i)$ and the identities appearing in Corollary \ref{cor1} and \eqref{Eq:75} arising in our analysis. The importance of these statements lies in the fact that combining them allows one to prove exactly when the residues are zero/nonzero, hence showing the exact pole structure of $K_2(t,x,x)$. For instance, $(i)$ would show that the actual simple poles when $x=y=\pm1$ are precisely at $t=2$ and $t=-2m$, $m\in\mathbb{N}$.
Likewise, $(ii)$--$(iv)$ would show that the more general case $x=y\neq\pm1$ has simple poles precisely at $t=-(2j-1)$, $j\in\mathbb{N}_0$.

Finally, we point out that Remark \ref{Remark2} shows that Open Problems \ref{Problem1} and \ref{Problem2} $(i)$ are in fact two sides of the same coin and can be combined as stating $p_{j+2,S_2}(-j)\neq0$ for all $j\in\mathbb{N}$.
\end{remark}

\subsection{On some Bernoulli identities}\label{S:Bernoulli} Here we highlight some curious identities involving Bernoulli polynomials which are proven in Section \ref{Sect:poly} as a byproduct of our explicit calculations of the residues of $K_1(t,\varphi,\varphi)$ and $K_2(t,x,x)$. Many of these identities appear to be new, resembling more intricate analogues of `convolutional' identities for Bernoulli numbers -- see \cite{DS13,G05,Zag14}.

Let $S_2(s):=\{s_{k}^{(2)}(s)\}_{k \in \N}$ with $s_{k}^{(2)}(s):=-B_k(s)/k$, and where $B_k(s)$ is the $k$th Bernoulli polynomial. In Proposition \ref{lem6}, we show that for $j \in \N$, the quantity $p_{j,S_2(s)}(1-j)$ (cf. Def. \ref{Def:pj}) is independent of $s$, a fact which we use later to show that $p_{j,S_2(1)}(1-j)=p_{j,S_2}(1-j)=0$ for $j$ odd. Using \eqref{Eq:matrix}, this implies that the determinant of the following matrix
\begin{align}\label{Eq:BerMatrix}
    \mathcal{N}_{j,S_2(s)}(1-j) = \begin{pmatrix}
        (j-1)B_1(s) && 1 && 0  && \cdots &&\cdots\\ (j-1)B_2(s) && (j-1)B_1(s) &&2&&0 &&\cdots \\ 
        (j-1)B_3(s) && (j-1)B_2(s)&& (j-1)B_1(s)&& 3 &&\cdots \\ \vdots && &&  && \ddots \\ (j-1)B_j(s) && (j-1)B_{j-1}(s) && \cdots &&  \cdots && (j-1)B_1(s)
    \end{pmatrix},\ j\in\mathbb{N}.
\end{align}
is independent of $s$ and identically zero for $j$ odd. For specific choices of $j$ even, this constant can be readily calculated, leading for instance to the following identities, which involve linear combinations of products of Bernoulli polynomials whose indices sum to $j$:\footnote{This corrects the second to last identity claimed in \cite[Sect. 5]{E13}.} 
\begin{gather}
    \frac{B_1(s)^2}{2}-\frac{B_2(s)}{2} = \frac{1}{24}, \quad  -\frac{4}{3}B_1(s)^3+2B_1(s)B_2(s)-\frac{2}{3}B_3(s)=0,\notag\\ \frac{27}{8}B_1(s)^4 -\frac{27}{4}B_1(s)^2B_2(s)+3B_1(s)B_3(s)+\frac{9}{8}B_2(s)^2-\frac{3}{4}B_4(s)=-\frac{9}{640}, \\ -\frac{128}{15}B_1(s)^5\hspace{-.05cm}+\frac{64}{3}B_1(s)^3B_2(s)\hspace{-.05cm}-\hspace{-.05cm}8B_1(s)B_2(s)^2\hspace{-.05cm}-\hspace{-.05cm}\frac{32}{3}B_1(s)^2B_3(s)\hspace{-.05cm}+\hspace{-.05cm}4B_1(s)B_4(s)\hspace{-.05cm}+\hspace{-.05cm}\frac{8}{3}B_2(s)B_3(s)\hspace{-.05cm}-\hspace{-.05cm}\frac{4}{5}B_5(s)\hspace{-.05cm}=0.\notag
\end{gather}
More generally, for some real constants $C_j$ independent of $s$, these identities can be summarized as
\begin{equation}\label{Eq:18}
\sum_{k=0}^j (1-j)^k \sum \frac{1}{\ell_1!\ell_2!\dots \ell_{j-k+1}!}\bigg(\frac{B_1(s)}{1}\bigg)^{\ell_1} \bigg(\frac{B_2(s)}{2}\bigg)^{\ell_2} \dots \bigg(\frac{B_{j-k+1}(s)}{j-k+1}\bigg)^{\ell_{j-k+1}}=\begin{cases}
C_j,& j \text{ even},\\
0,& j \text{ odd},
\end{cases}
\end{equation}
with the inner sum being over all sequences $\ell_1,\ell_2,\dots,\ell_{j-k+1}$ of nonnegative integers such that
\begin{align}
\ell_1+\ell_2+\dots+\ell_{j-k+1}=k,\quad \ell_1+2\ell_2+\dots+(j-k+1)\ell_{j-k+1}=j.
\end{align}
Open Problem \ref{Problem1} is equivalent to showing $C_j\neq0$ for all even $j$ by Remark \ref{Remark2}, a fact we conjecture to be true but have not been able to prove. Finding a closed form expression for these values is therefore of interest.

We also note that Proposition \ref{lem7} leads to another sequence of identities involving Bernoulli numbers. For instance, setting $m =4$ and $q= 3$ in \eqref{eq:36} yields
\begin{align}
    2B_1^3 -3B_1^2-6B_1B_2+3B_2+4B_3=0,
\end{align}
which in tandem with the cubic identity above yields $B_1^2 +B_1B_2-B_2-B_3 =0$. A more general identity which we utilize is given in \eqref{Eq:75}: $p_{j,S_2(1)}(2-j)=-\frac{1}{2} p_{j-1,S_2(1)}(2-j)$ for odd $j\geq1$. This is a quite curious identity whenever considered in terms of Bernoulli numbers as in \eqref{Eq:18} or via the matrix formulation \eqref{Eq:BerMatrix}.

See Appendix \ref{AppPoly} for similar identities involving Bell polynomials and polylogarithms stemming from our general analysis, with Bernoulli identities as special cases.

\subsection{Notation}\label{S:Notation}
For simplicity and readability, we collect all the functions and constants explicitly here for reference that will be used freely throughout:
\begin{itemize}
    \item $\gamma_E$ denotes the Euler-Mascheroni constant;
    \item $B_k$ denotes the Bernoulli numbers with the convention $B_1=-1/2$;
    \item $B_k(s)$ denotes the Bernoulli polynomials which satisfy $B_k(0)=B_k$;
    \item $H_n:=\sum_{k=1}^n \frac{1}{k}$ denotes the $n$th harmonic number;
    \item $(a)_k$ denotes Pochhammer's symbol;
    \item $\zeta_R(s)=\sum_{n\in\mathbb{N}}n^{-s},$ $\Re(s)>1$, denotes the Riemann zeta function;
    \item $\eta(s)=\sum_{n\in\mathbb{N}}(-1)^{n-1}n^{-s}$, $\Re(s)>0$, denotes the Dirichlet eta function (or alternating zeta function);
    \item $\Li_s(z) :=\sum_{n\in \N}z^n n^{-s}$ denotes the polylogarithm function (for appropriate $z$ and $s$);
    \item $\psi(s) :=\frac{\Gamma'(s)}{\Gamma(s)}$ denotes the digamma function;
    \item $\widehat{B}_{n,k}$ denotes the \textup{partial ordinary Bell polynomials} \cite[p. 136]{Co74} \begin{equation}
\widehat{B}_{n,k}(z_1,z_2,\dots,z_{n-k+1})=\sum \frac{k!}{j_1!j_2!\dots j_{n-k+1}!}z_1^{j_1} z_2^{j_2} \dots z_{n-k+1}^{j_{n-k+1}},
\end{equation}
with the sum being over all sequences $j_1,j_2,\dots,j_{n-k+1}$ of nonnegative integers such that
\begin{align}
j_1+j_2+\dots+j_{n-k+1}=k,\quad j_1+2j_2+\dots+(n-k+1)j_{n-k+1}=n.
\end{align}
\end{itemize}

\section{Proofs of main results}\label{S2}

In order to prove our main results, we begin with an auxiliary lemma which essentially isolates the role of each term in the large $n$ asymptotics of the eigenvalues when analytically continuing the spectral series representations.

\begin{lem}\label{Lem:GenAsymp}
Let $\mu_n$ have as its asymptotic expansion $s_{-1}\ln(n)+\sum_{k\in\mathbb{N}_0}s_{k}n^{-k}$ as $n\to\infty$ for a real sequence $S=\{s_n\}_{n=-1}^\infty$. Then, for each $M\in\mathbb{N}$, on each compact set $K$ of the complex $t$-plane one has
\begin{align}\label{Eq:6a}
    e^{-t\mu_n} = n^{-s_{-1}t}e^{-s_0 t}\sum_{j=0}^{M-1}\frac{p_{j,S}(t)}{n^j}+e^{-s_0 t}O(n^{-s_{-1}\Re(t)-M}),
\end{align}
where $p_{j,S}(t)$ are polynomials in $t$ of degree $j$ which depend on $S$ given as in Definition \ref{Def:pj} while the implicit constant on the remainder term only depends on $M$ and the compact set $K$.
\end{lem}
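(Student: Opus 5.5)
We write $\mu_n = s_{-1}\ln(n) + s_0 + r_n$, where $r_n$ has asymptotic expansion $\sum_{k\ge1}s_kn^{-k}$. Then
\[
e^{-t\mu_n}=n^{-s_{-1}t}e^{-s_0t}e^{-tr_n},
\]
so it suffices to show that
\[
e^{-tr_n}=\sum_{j=0}^{M-1}p_{j,S}(t)n^{-j}+O(n^{-M})
\]
uniformly for $t\in K$. Since $|n^{-s_{-1}t}|=n^{-s_{-1}\Re(t)}$, the stated remainder follows immediately after multiplying through.

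The plan has three steps.

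\textbf{Step 1: truncate $r_n$.} Fix $M$ and set $r_n^{(M)}:=\sum_{k=1}^{M-1}s_kn^{-k}$. By hypothesis $r_n-r_n^{(M)}=O(n^{-M})$, and in particular $r_n=O(n^{-1})$ is bounded for large $n$. Since $t$ ranges over the compact set $K$, we have $|t(r_n-r_n^{(M)})|\le C_K n^{-M}$. Using $|e^{w}-1|\le |w|e^{|w|}$ we get
\[
e^{-tr_n}=e^{-tr_n^{(M)}}\bigl(1+O(n^{-M})\bigr),
\]
and $e^{-tr_n^{(M)}}$ is uniformly bounded on $K$ for $n\ge1$. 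For small $n$ nothing needs to be checked, since all quantities are bounded and can be absorbed into the constant.

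\textbf{Step 2: expand the exponential.} Expand
\[
e^{-tr^{(M)}_n}=\sum_{k\ge0}\frac{(-t)^k}{k!}\Bigl(\sum_{i=1}^{M-1}s_in^{-i}\Bigr)^k.
\]
By the defining generating function of the partial ordinary Bell polynomials, $\bigl(\sum_{i\ge1}s_ix^i\bigr)^k=\sum_{j\ge k}\widehat B_{j,k}(s_1,\dots,s_{j-k+1})x^j$. For $j\le M-1$ the coefficient only involves $s_1,\dots,s_{j}$, so truncating the inner series at $M-1$ does not affect these coefficients. Collecting powers $x^j=n^{-j}$ with $j<M$ therefore gives exactly
\[
\sum_{k=0}^j\frac{(-1)^k}{k!}\widehat B_{j,k}\,t^k=p_{j,S}(t).
\]
Each $p_{j,S}$ is a polynomial of degree $j$, with leading coefficient $(-s_1)^j/j!$; if $s_1=0$ it has degree at most $j$, and this is what "degree $j$" is to be read as.

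\textbf{Step 3: bound the tail uniformly.} The tail consists of the terms with $j\ge M$. This is a power series in $x=n^{-1}$, namely the Taylor remainder of the entire function $x\mapsto \exp\bigl(-t\sum_{i<M}s_ix^i\bigr)$, which is jointly analytic in $(t,x)$. A Cauchy estimate on $|x|\le1$ with $t\in K$ shows the tail is $O(|x|^M)=O(n^{-M})$ uniformly on $K$. Combining this with Step 1 yields \eqref{Eq:6a}.

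\textbf{Identifying the determinant form.} The equality with $\frac{(-1)^j}{j!}\det\mathcal N_{j,S}(t)$ in \eqref{Eq:12a} is the standard Hessenberg representation of the coefficients of $\exp(g)$, cited from \cite{OS22}. To verify it directly, let $a_j$ be the coefficients of $F=\exp(-t\sum s_ix^i)$. Differentiating, $F'=F\cdot(-t\sum is_ix^{i-1})$, which gives the recursion
\[
ja_j=-t\sum_{i=1}^j is_ia_{j-i}.
\]
Cramer's rule applied to this lower-triangular system, or cofactor expansion along the last column, produces the Hessenberg determinant with the signs absorbed as in \eqref{Eq:12a}.

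\textbf{Main obstacle.} The main care point is the uniformity in $t$ of the remainder. This is handled by the compactness of $K$ together with the analyticity and Cauchy-estimate argument of Step 3, rather than by any delicate computation. The identification with the Bell-polynomial and determinant formulas is routine bookkeeping.
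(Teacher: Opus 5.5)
Your proposal is correct and follows essentially the same route as the paper: you factor out $n^{-s_{-1}t}e^{-s_0t}$ and split the exponent into the truncated sum $\sum_{k=1}^{M-1}s_kn^{-k}$ plus an $O(n^{-M})$ remainder controlled by $|e^{w}-1|\le|w|e^{|w|}$. You then expand the truncated exponential and read off $p_{j,S}$ from the ordinary Bell polynomial generating function, using a Cauchy estimate where the paper uses Taylor's remainder theorem, which makes no real difference. Your extra check of the Hessenberg determinant via the recursion $ja_j=-t\sum_{i=1}^j is_ia_{j-i}$ is accurate, and so is your caveat that ``degree $j$'' should read ``degree at most $j$'' when $s_1=0$, which is exactly the case for $S_1$.
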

\begin{proof}
We being by writing for each $M\in\mathbb{N}$,
\begin{align}\label{Eq:7a}
e^{-t\mu_n}=n^{-s_{-1}t}e^{-s_0 t}e^{-t D_{M,S}(n)}e^{-t R_{M,S}(n)},\ \text{ where }\ D_{M,S}(n)=\sum_{k=1}^{M-1} s_kn^{-k},
\end{align}
and $R_{M,S}(n)$ represents the remainder of the expansion for $\mu_n$.

Now for any compact set $K$ in the $t$-plane, we estimate as follows: First, for large enough $n$ one has, since $R_{M,S}(n)=O(n^{-M})$,
\begin{equation}
\big|e^{-t R_{M,S}(n)}\big|\leq 1+\big|e^{-t R_{M,S}(n)}-1\big|\leq 1+|t| |R_{M,S}(n)|e^{|t||R_{M,S}(n)|}\leq 1+ C_{M,K} n^{-{M}},
\end{equation}
for some constant $C_{M,K}$ which will only depend on the fixed $M\in\mathbb{N}$ and the chosen compact set $K$.

On the other hand, one also has that
\begin{equation}\label{Eq:9a}
e^{-t D_{M,S}(n)}=\sum_{j=0}^{M-1}\frac{(-t D_{M,S}(n))^j}{j!}+E_{M,S}(t,n)=\sum_{j=0}^{M-1}\frac{p_{j,S}(t)}{n^j}+O(n^{-M}),
\end{equation}
since for large enough $n$ one has by Taylor's Remainder Theorem, $|E_{M,S}(t,n)|\leq \widetilde{C}_{M,K}n^{-M}$ for some constant $\widetilde{C}_{M,K}$ which once again only depends on $M$ and the set $K$. Here $p_{j,S}(t)$ is the polynomial in $t$ found via expanding $(-t D_{M,S}(n))^j$ to isolate $n^{-j}$ for $j=0,\dots,M-1$.

Combining these estimates now yields \eqref{Eq:6a} via \eqref{Eq:7a}.

To prove \eqref{Eq:12a}, recall that these polynomials came from expanding the exponential $e^{-t D_{M,S}(n)}$. Let us momentarily consider the exponential $e^{-t D_S(n)}$ instead where $D_S(n)=\lim_{M\to\infty} D_{M,S}(n)$, that is, it includes the entire asymptotic series for $\mu_n-s_{-1}\ln(n) -s_0$ so that we consider the formal expression
\begin{equation}\label{Eq:17a}
\exp\bigg(-t\sum_{k\in\mathbb{N}}s_kn^{-k}\bigg).
\end{equation}
We now compare this with the generating function for the ordinary Bell polynomials given by
\begin{equation}\label{Eq:BellGena}
\exp\bigg( u\sum_{\ell\in\mathbb{N}} x_\ell y^\ell\bigg)=\sum_{j\geq k\geq 0}^\infty \widehat{B}_{j,k}(x_1,\dots,x_{j-k+1})\frac{u^k}{k!}y^j.
\end{equation}
In particular, we let $u=-t$, $y=n^{-1}$, and identify the sequences with one another to arrive at \eqref{Eq:12a}.
\end{proof}

\subsection{On the polynomials \texorpdfstring{$p_{j,S}(t)$}{p}}\label{Sect:poly}

We next turn to a more detailed analysis of the polynomials $p_{j,S}(t)$ as needed for our later calculation of residues, beginning with a general observation.

\begin{lem}\label{Lem:Pjident}
Let $p_{j,S}(t)$ be given as in \eqref{Eq:12a} and assume the sequence $S$ satisfies $s_{2k-1}\equiv0$ for all $k\in\mathbb{N}$. Then $p_{j,S}(t)\equiv0$ for all odd $j\geq1$.
\end{lem}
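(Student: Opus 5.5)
We will work directly with the Bell polynomial expression in \eqref{Eq:12a}, or equivalently with the generating function \eqref{Eq:BellGena} used in the proof of Lemma \ref{Lem:GenAsymp}. There are two natural routes, and we will give the generating-function argument as the main proof, with the combinatorial one as a cross-check.

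\emph{Generating function (parity) argument.} By the proof of Lemma \ref{Lem:GenAsymp}, identifying $u=-t$ and $y=n^{-1}$ in \eqref{Eq:BellGena} gives the formal power series identity
\begin{equation*}
\exp\bigg(-t\sum_{\ell\in\mathbb{N}} s_\ell y^\ell\bigg)=\sum_{j=0}^\infty p_{j,S}(t)\,y^j,
\end{equation*}
valid in $\mathbb{C}[t][[y]]$. If $s_{2k-1}=0$ for all $k\in\mathbb{N}$, then the inner sum contains only even powers of $y$. Hence the exponent is an even formal power series in $y$ (with polynomial coefficients in $t$).

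The exponential of an even formal power series is again even, since $y\mapsto -y$ is a ring automorphism of $\mathbb{C}[t][[y]]$ commuting with $\exp$. Comparing coefficients of $y^j$ for odd $j$ then gives $p_{j,S}(t)=-p_{j,S}(t)$, so $p_{j,S}(t)\equiv0$.

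\emph{Combinatorial cross-check.} One can also read the claim off the definition of $\widehat{B}_{j,k}$. Each monomial $s_1^{j_1}\cdots s_{j-k+1}^{j_{j-k+1}}$ appearing in $\widehat{B}_{j,k}$ satisfies $\sum_i i\,j_i=j$. If $j$ is odd, some odd index $i$ must have $j_i\geq1$, and then the monomial contains a vanishing factor $s_i$. So every term of $\widehat{B}_{j,k}(s_1,\dots)$ vanishes for every $k$, and summing over $k$ in \eqref{Eq:12a} gives $p_{j,S}\equiv0$.

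The only point requiring care is making sure the identification of $p_{j,S}$ with the coefficients of the generating function holds as an exact formal identity, rather than only asymptotically in $n$. This follows immediately from \eqref{Eq:BellGena}, which is an identity of formal power series, so we expect no real obstacle. The same statement could alternatively be derived from the Hessenberg determinant form, but that is unnecessary.
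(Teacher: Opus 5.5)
Your proof is correct, and your main argument takes a different route from the paper's.

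The paper gives two proofs. The first is combinatorial, using the composition form $\widehat{B}_{j,k}(s_1,\dots,s_{j-k+1})=\sum_{j_1+\cdots+j_k=j}s_{j_1}\cdots s_{j_k}$: every composition of an odd $j$ has an odd part. This is essentially your cross-check. The second is a linear-algebra argument on the Hessenberg matrix \eqref{Eq:matrix}: when $s_{2k-1}=0$, the $(j+1)/2$ odd-numbered rows are supported on the $(j-1)/2$ even-numbered columns. They are therefore linearly dependent, so $\det\mathcal{N}_{j,S}(t)=0$.

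You instead read $p_{j,S}(t)$ as the $y^j$-coefficient of $\exp(-t\sum_{\ell} s_\ell y^\ell)$, and use that the exponential of an even series is even. By \eqref{Eq:BellGena} with $u=-t$ this is an exact identity in $\mathbb{C}[t][[y]]$, so the point you flag is indeed harmless.

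This is the most conceptual of the three arguments, and it gives more at no extra cost. The same substitution $y\mapsto -y$ shows that replacing $s_\ell$ by $(-1)^\ell s_\ell$ multiplies every $p_{j,S}$ by $(-1)^j$. That is exactly the input \cite[Eq. (2.19)]{OS22} behind Proposition \ref{lem5}, since $S_2(0)$ is obtained from $S_2(1)$ in this way. The lemma is then just the case where $S$ is fixed by this symmetry.

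The paper's determinant argument, by contrast, never touches Bell polynomials. It works directly in the matrix form in which the Bernoulli identities around \eqref{Eq:BerMatrix} are stated.
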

\begin{proof}
We offer two proofs of this result: one using combinatorial properties of Bell polynomials and the other using the matrix formulation \eqref{Eq:matrix}.

For the former, note the ordinary partial Bell polynomials $\widehat{B}_{j,k}$ are identically zero whenever $j\geq1$ is odd and all odd entries of the sequence defining them are 0. One can see this from \cite[Eq. (2.4)]{OS22}:
\begin{align}\label{Eq:BellEq}
    \widehat{B}_{j,k}(s_1,...,s_{j-k+1}) = \sum_{j_1+\cdots+j_k=j}s_{j_1}\cdots s_{j_k},
\end{align}
which represents the ordinary partial Bell polynomial as a sum over partitions of $j$ of size $k$, where the summand corresponding to the partition $(j_1,...,j_k)$ is the product of the entries of $S$ with indices $j_1,...,j_k$. Since $j$ is odd, each partition $(j_1,...,j_k)$ must contain at least one odd integer $j_\ell$, and therefore the corresponding summand is identically zero.

Alternatively, from the matrix formulation \eqref{Eq:matrix} we see that the $(j+1)/2$ odd numbered rows have nonzero entries in only the $(j-1)/2$ even numbered columns. Therefore, the rows lie in a $(j-1)/2$ dimensional subspace, but there is one more row than this dimension, so they must be linearly dependent. Thus the full set of rows are linearly dependent, proving the determinant, and hence $p_{j,S}(t)$, is zero.
\end{proof}

Next, we specialize to the sequence of numbers appearing in the analysis of our second operator, $A_2$, via the harmonic numbers, $H_n$. Let the sequence $S_2=\{s_n^{(2)}\}_{n=-1}^\infty$ satisfy
\begin{align}\label{Eq:S2}
 s_{-1}^{(2)}=1,\ s_0^{(2)}=\gamma_E,\ s_k^{(2)}=-B_k/k,\quad k\in\mathbb{N}.
\end{align} 
For the consideration of the polynomials $p_{j,S_2}(t)$, it suffices to only consider the terms of the sequence with $k\neq0$. As such, noting the well-known relationship between harmonic numbers and the digamma function, $\psi(n+1)=H_n-\gamma_E,$ $n\in\mathbb{N}$, we see that \begin{align}\label{Eq:68}
    e^{-t\psi(n+1)} = n^{-t}\sum_{j=0}^{M-1}\frac{p_{j,S_2}(t)}{n^j}+O(n^{-\Re(t)-M}),\quad n\in\mathbb{N},
\end{align}
relating the analysis of the polynomials $p_{j,S_2}(t)$ with the study of exponentials of digamma functions and approximating the Euler--Mascheroni constant -- see, for example, \cite{CEV13,Ya12}.
In particular, this allows us to identify $p_{j,S_2}(t)$ with $G_j(-t,1)$ in equation (2.5) of the preprint \cite{E13}. As \cite{E13} does not seem to have been published and the notation differs significantly from the present paper, we include direct proofs of the relevant parts for our analysis here for the convenience of the reader. We also extend these results in additional directions relevant to our work. 

We start by writing (found by differentiating \cite[Eq. 5.11.8]{DLMF}, or alternatively \cite[Eq. 2.11.9]{Luke69})
\begin{align}
    \psi(n+s) \sim \ln n +\sum_{k\geq 1}\frac{(-1)^k B_{k}(s)}{k}n^{-k},\quad n\in\mathbb{N},
\end{align}
where $B_{k}(s)$ are the Bernoulli polynomials. Noting that $B_n(1)=(-1)^nB_n$, this will give a natural generalization of our identities. So we can write 
\begin{align}
    e^{-t\psi(n+s)} \sim n^{-t}\sum_{j=0}^{\infty} \frac{p_{j,S_2(s)}(t)}{n^j},
\end{align}
with $S_2(s):=\{s_k^{(2)}(s)\}_{k \in \N}$ with $s_k^{(2)}(s):=-B_k(s)/k$ (where the terms for $k=-1,0$ can safely be ignored) and we point out that $S_2=S_2(1)$. We begin with the following two lemmas:

\begin{prop}\label{lem5}
The following holds: $p_{j,S_2(1)}(t)=(-1)^j p_{j,S_2(0)}(t)$.
\end{prop}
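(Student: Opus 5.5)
The identity follows from the Bernoulli reflection $B_k(1)=(-1)^kB_k(0)$ together with the homogeneity of the partial ordinary Bell polynomials. We work directly from the Bell polynomial form \eqref{Eq:12a},
\[
p_{j,S}(t)=\sum_{k=0}^j \frac{(-1)^k}{k!}\widehat{B}_{j,k}(s_1,\dots,s_{j-k+1})t^k .
\]
Since $s_k^{(2)}(1)=-B_k(1)/k$ and $B_k(1)=(-1)^kB_k=(-1)^kB_k(0)$, we have
\[
s_k^{(2)}(1)=(-1)^k s_k^{(2)}(0),\qquad k\in\mathbb{N}.
\]
This holds for every $k\geq1$, including $k=1$, where $B_1(1)=1/2=-B_1$.

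The key step is the weighted homogeneity of $\widehat{B}_{j,k}$:
\[
\widehat{B}_{j,k}(c z_1,c^2z_2,\dots,c^{j-k+1}z_{j-k+1})=c^j\,\widehat{B}_{j,k}(z_1,\dots,z_{j-k+1}).
\]
This is immediate from the defining sum. Each monomial $z_1^{j_1}\cdots z_{j-k+1}^{j_{j-k+1}}$ picks up the factor $c^{j_1+2j_2+\cdots}$, and the exponent equals $j$ by the constraint $j_1+2j_2+\cdots+(j-k+1)j_{j-k+1}=j$. One can equally see it from \eqref{Eq:BellEq}: each summand $s_{j_1}\cdots s_{j_k}$ with $j_1+\cdots+j_k=j$ picks up the factor $(-1)^{j_1+\cdots+j_k}=(-1)^j$.

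Taking $c=-1$ gives $\widehat{B}_{j,k}(S_2(1))=(-1)^j\widehat{B}_{j,k}(S_2(0))$ for every $k$. Summing over $k$ in \eqref{Eq:12a} then yields $p_{j,S_2(1)}(t)=(-1)^jp_{j,S_2(0)}(t)$.

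As a cross-check, the same conclusion follows from the generating function \eqref{Eq:BellGena}. The substitution $s_k\mapsto(-1)^ks_k$ is equivalent to replacing $y=n^{-1}$ by $-y$, which multiplies the coefficient of $y^j$ by $(-1)^j$.

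There is no real obstacle here. The only point requiring care is to confirm the convention $B_1=-1/2$, so that the reflection formula $B_k(1)=(-1)^kB_k(0)$ holds for all $k\geq1$ and not merely for $k\ge2$.
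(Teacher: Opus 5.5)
Your proof is correct and follows the same route as the paper: the reflection $B_k(1)=(-1)^kB_k(0)$ gives $s_k^{(2)}(1)=(-1)^k s_k^{(2)}(0)$. The weighted homogeneity of $\widehat{B}_{j,k}$ with $c=-1$ then produces the factor $(-1)^j$ termwise in \eqref{Eq:12a}; the paper cites this as \cite[Eq. (2.19)]{OS22}, whereas you derive it directly from the constraint $j_1+2j_2+\cdots=j$.
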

\begin{proof}
Note first that $B_n(0)=B_n$ and $B_n(1)=(-1)^n B_n$, so that the sequences $S_2(0)$ and $S_2(1)$ only differ by the sign on their $k=1$ terms. Therefore, the result follows from \eqref{Eq:12a} and \cite[Eq. (2.19)]{OS22} since the remaining odd terms of the sequence are zero so that one has by \cite[Eq. (2.19)]{OS22},
\begin{equation}
\widehat{B}_{j,k}(s_1^{(2)}(0),\dots,s_{j-k+1}^{(2)}(0))=(-1)^j \widehat{B}_{j,k}(-s_1^{(2)}(1),\dots,s_{j-k+1}^{(2)}(1)),
\end{equation}
that is, $p_{j,S_2(1)}(t)=(-1)^j p_{j,S_2(0)}(t)$ by comparing to \eqref{Eq:12a}.
\end{proof}

\begin{prop}\label{lem6}
In the variable $s$, $p_{j,S_2(s)}(t)$ is of degree $j$ if $t\notin\{1-j,2-j,\dots,0\}$.
Furthermore, if $t=\ell-j$ for some $\ell\in\{1,\dots,j\}$, then $p_{j,S_2(s)}(\ell-j)$ is a polynomial in $s$ with degree $\ell-1$.

In particular, if $t=1-j$, then $p_{j,S_2(s)}(1-j)$ is independent of $s$ defining the sequence $S_2(s)$.
\end{prop}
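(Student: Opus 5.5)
The plan is to use the shift structure of the digamma function and not work with the Bell-polynomial formula \eqref{Eq:12a} directly. Write $q_j(t):=p_{j,S_2(0)}(t)$. Since $\psi(n+s)$ is a function of $x=n+s$ alone, the expansion above with $s=0$, read in the variable $x$, gives
\begin{equation*}
e^{-t\psi(x)}\sim x^{-t}\sum_{j\ge0} q_j(t)\,x^{-j},\qquad x\to\infty .
\end{equation*}
Lemma \ref{Lem:GenAsymp} justifies this, since its proof only uses the asymptotic expansion of $\mu$ at $\infty$. Substituting $x=n+s$ and expanding each power binomially, $(n+s)^{-t-m}=n^{-t-m}\sum_{i\ge0}\binom{-t-m}{i}s^in^{-i}$, then collecting powers of $n$, I expect
\begin{equation*}
p_{j,S_2(s)}(t)=\sum_{i=0}^{j}\binom{-t-j+i}{i}\,q_{j-i}(t)\,s^i .
\end{equation*}
To make this rigorous, fix $s$ in a compact set and $t$ in a compact set. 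Both sides are then coefficients of asymptotic expansions of the same function $e^{-t\psi(n+s)}$ in the scale $n^{-t-j}$, and such coefficients are unique. Alternatively, one can treat it as an identity of formal series in $n^{-1}$: the series for $\psi(n+s)$ is the formal re-expansion of the series for $\psi(x)$, by the addition formula $B_k(s)=\sum_i\binom{k}{i}B_{k-i}s^i$, so the exponentials agree formally as well.

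With this formula, the degree statements reduce to reading off coefficients, using $q_0\equiv1$.

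\emph{Top coefficient.} The coefficient of $s^j$ is
\begin{equation*}
\binom{-t}{j}=\frac{(-1)^j\,t(t+1)\cdots(t+j-1)}{j!},
\end{equation*}
which vanishes exactly when $t\in\{0,-1,\dots,1-j\}$. This gives the first claim.

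\emph{Lower coefficients when $t=\ell-j$.} The coefficient of $s^i$ becomes $\binom{i-\ell}{i}q_{j-i}(\ell-j)$. For $i\ge\ell$, the upper entry $i-\ell$ is a nonnegative integer smaller than $i$, so the binomial coefficient vanishes. Hence the polynomial has degree at most $\ell-1$.

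\emph{Degree-$(\ell-1)$ coefficient.} For $i=\ell-1$ the binomial is $\binom{-1}{\ell-1}=(-1)^{\ell-1}$. So the coefficient of $s^{\ell-1}$ is $(-1)^{\ell-1}q_{m}(1-m)$ with $m=j-\ell+1$.

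\emph{Independence of $s$ at $t=1-j$.} Taking $\ell=1$, only the $i=0$ term survives, and it equals $q_j(1-j)$.

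The main obstacle is the precise meaning of ``degree $\ell-1$''. The argument gives degree at most $\ell-1$, with the degree-$(\ell-1)$ coefficient equal to $\pm q_m(1-m)$. By Proposition \ref{lem5}, this is $\pm p_{m,S_2}(1-m)$, and these values are expected to vanish for odd $m$ (as the paper later shows). So exactness of the degree holds precisely when $p_{m,S_2}(1-m)\neq0$, which is tied to Open Problem \ref{Problem1}. I would state the result as ``degree at most $\ell-1$, with the explicit leading coefficient above'', and I would not attempt to prove that this coefficient is nonzero. The remaining step, the uniqueness or formal-series justification of the displayed identity, is routine.
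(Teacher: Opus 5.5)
Your argument is correct and is essentially the paper's. The paper uses $\partial_s\psi(n+s)=\partial_n\psi(n+s)$ to obtain the recurrence $\partial_s p_{j,S_2(s)}(t)=(1-j-t)\,p_{j-1,S_2(s)}(t)$ and iterates it; your binomial formula is exactly the Taylor expansion at $s=0$ of that recurrence, and the paper writes out these same coefficients in the proof of Proposition \ref{lem7}. Your caution about the second claim is justified: both arguments give only degree at most $\ell-1$, with coefficient of $s^{\ell-1}$ equal to $(-1)^{\ell-1}p_{m,S_2(0)}(1-m)$, where $m=j-\ell+1$. This coefficient vanishes whenever $m$ is odd by Corollary \ref{cor1}; for example, $p_{j,S_2(s)}(0)\equiv0$ for all $j\ge1$ because $p_{j,S}(0)=\delta_{j,0}$. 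So ``at most $\ell-1$'' is the correct reading of the statement.
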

\begin{proof}
First note that differentiating the  asymptotic expansion in $n$ or $s$ yields the same result as this simply becomes the exponential of the asymptotic expansion of the trigamma function (cf. \cite[Sect. 5.15]{DLMF}).
This gives the equality of formal power series
\begin{align}
    \sum_{j=1}^\infty (1-j-t)\frac{p_{j-1,S_2(s)}(t)}{n^{j+t}} = \sum_{j=0}^\infty \frac{\frac{\partial}{\partial s}p_{j,S_2(s)}(t)}{n^{j+t}},
\end{align}
yielding for $j\geq1$, $\frac{\partial}{\partial s}p_{j,S_2(s)}(t)=(1-j-t)p_{j-1,S_2(s)}(t).$
Iterating yields
\begin{equation}
\frac{\partial^k}{\partial s^k}p_{j,S_2(s)}(t)=\bigg[\prod_{\ell=1}^{k}(\ell-j-t)\bigg]p_{j-k,S_2(s)}(t),\quad k\in\{1,\dots,j\}.
\end{equation}
As $p_{0,S_2(s)}(t)\equiv 1$, we see that $p_{j,S_2(s)}(t)$ is of degree $j$ in $s$ if $t\notin\{1-j,2-j,\dots,0\}$.
Furthermore, if $t=\ell-j$ for some $\ell\in\{1,\dots,j\}$, then $p_{j,S_2(s)}(\ell-j)$ is a polynomial in $s$ with degree $\ell-1$.
\end{proof}

Combining the previous two results, we arrive at the following result important to the current study:
\begin{corollary}\label{cor1}
Whenever $t=1-j$, with $j$ odd,  $p_{j,S_2(s)}(1-j)\equiv 0$.\footnote{Notice that for $s=1/2$, we can in fact appeal to Lemma \ref{Lem:Pjident} to see the restriction on $t$ values is not needed for this unique choice of $s$ due to all odd terms being identically zero in the corresponding sequence. This also offers an alternate proof of the corollary since, once we have that the choice $t=1-j$ yields independence of $s$, choosing $s=1/2$ and $j$ odd shows the result via Lemma \ref{Lem:Pjident}.}
\end{corollary}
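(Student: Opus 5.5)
By Proposition \ref{lem6}, the quantity $p_{j,S_2(s)}(1-j)$ does not depend on $s$; write $c_j$ for its common value. The plan is to evaluate $c_j$ at two convenient values of $s$ and show it must vanish. Two routes are available, and I would present the first as the main argument and the second as a check.

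\textbf{First route.} Take $s=0$ and $s=1$.
\begin{itemize}
\item Proposition \ref{lem5} gives $p_{j,S_2(1)}(t)=(-1)^j p_{j,S_2(0)}(t)$ as polynomials in $t$, so in particular at $t=1-j$.
\item Independence of $s$ gives $c_j=p_{j,S_2(1)}(1-j)$ and $c_j=p_{j,S_2(0)}(1-j)$.
\item Hence $c_j=(-1)^j c_j$, and for odd $j$ this reads $c_j=-c_j$, so $c_j=0$.
\end{itemize}

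\textbf{Second route.} Take $s=\tfrac12$.
\begin{itemize}
\item Odd-index Bernoulli polynomials vanish at the midpoint: $B_{2k-1}(\tfrac12)=0$ for all $k\in\mathbb{N}$.
\item Therefore the sequence $S_2(\tfrac12)$, with entries $s_k^{(2)}(\tfrac12)=-B_k(\tfrac12)/k$, has all odd-indexed terms equal to zero.
\item Lemma \ref{Lem:Pjident} then gives $p_{j,S_2(1/2)}(t)\equiv0$ for odd $j$, for every $t$ and in particular $t=1-j$.
\item By $s$-independence, $c_j=0$ for all $s$.
\end{itemize}

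The only point needing care is that Proposition \ref{lem6} really applies to every odd $j\in\mathbb{N}$. It requires $t=\ell-j$ with $\ell=1\in\{1,\dots,j\}$, which holds for all $j\ge1$. The degree conclusion is then $\ell-1=0$, so the value is a constant in $s$.

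The main obstacle, if any, is making sure the identity in Proposition \ref{lem5} is an identity of polynomials in $t$, so that it may be evaluated at $t=1-j$. This follows from the Bell polynomial representation \eqref{Eq:12a}, in which the sign relation holds coefficientwise in $t$. For $j=1$ the result is immediate from the definition, since $p_{1,S_2(s)}(0)$ is a multiple of $t$ evaluated at $t=0$.
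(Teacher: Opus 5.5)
Your proposal is correct and matches the paper. Your first route is exactly the paper's argument: combine the $s$-independence at $t=1-j$ from Proposition \ref{lem6} with the sign relation $p_{j,S_2(1)}(t)=(-1)^j p_{j,S_2(0)}(t)$ from Proposition \ref{lem5}. Your second route, taking $s=\tfrac12$ and applying Lemma \ref{Lem:Pjident}, is precisely the alternate proof given in the corollary's footnote.
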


We also have the following general result which includes Corollary \ref{cor1} via Proposition \ref{lem6}:
\begin{prop} \label{lem7}
    Let $m\in\mathbb{N}$. Then for each $q \in \{1,\dots,m\}$ one has
    \begin{align} \label{eq:36}
    \sum_{k=1}^{q-1} \frac{(-1)^{m-k}}{k!} \Bigg[\prod_{\ell=1}^k (\ell-q)\Bigg]p_{m-k,S_2(1)}(q-m) = \begin{cases}
        2p_{m,S_2(1)}(q-m), &m\textnormal{ odd}, \\ 0, & m \textnormal{ even}.
    \end{cases}
\end{align}
\end{prop}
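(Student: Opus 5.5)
The plan is to expand the polynomial $s\mapsto p_{m,S_2(s)}(t)$ in a Taylor series about $s=1$, evaluate it at $s=0$, and then use Proposition \ref{lem5} to rewrite the left-hand side back in terms of $s=1$. The case distinction on the parity of $m$ should come directly from the sign $(-1)^m$ in Proposition \ref{lem5}.

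\emph{Taylor expansion.} By Proposition \ref{lem6}, $p_{m,S_2(s)}(t)$ is a polynomial in $s$ of degree at most $m$, so Taylor's formula about $s=1$ is exact:
\begin{equation*}
p_{m,S_2(0)}(t)=\sum_{k=0}^{m}\frac{(-1)^k}{k!}\,\frac{\partial^k}{\partial s^k}p_{m,S_2(s)}(t)\Big|_{s=1}.
\end{equation*}
The proof of Proposition \ref{lem6} gives the iterated derivative formula
\begin{equation*}
\frac{\partial^k}{\partial s^k}p_{m,S_2(s)}(t)=\Big[\prod_{\ell=1}^k(\ell-m-t)\Big]p_{m-k,S_2(s)}(t),\qquad 1\le k\le m.
\end{equation*}
Evaluating at $s=1$ and substituting $t=q-m$ turns each product into $\prod_{\ell=1}^k(\ell-q)$. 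This product vanishes for every $k\ge q$, since then the factor with $\ell=q$ is present. Hence the sum truncates at $k=q-1$:
\begin{equation*}
p_{m,S_2(0)}(q-m)=\sum_{k=0}^{q-1}\frac{(-1)^k}{k!}\Big[\prod_{\ell=1}^k(\ell-q)\Big]p_{m-k,S_2(1)}(q-m).
\end{equation*}

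\emph{Applying Proposition \ref{lem5}.} Proposition \ref{lem5} gives $p_{m,S_2(0)}(q-m)=(-1)^m p_{m,S_2(1)}(q-m)$. Multiplying the displayed identity by $(-1)^m$ and using $(-1)^{m+k}=(-1)^{m-k}$, the left side becomes $p_{m,S_2(1)}(q-m)$. The $k=0$ term on the right is $(-1)^m p_{m,S_2(1)}(q-m)$, with the empty product equal to $1$. Moving it to the left leaves exactly the sum in \eqref{eq:36} on the right, and on the left the quantity
\begin{equation*}
\big(1-(-1)^m\big)\,p_{m,S_2(1)}(q-m),
\end{equation*}
which equals $2p_{m,S_2(1)}(q-m)$ for $m$ odd and $0$ for $m$ even. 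This is \eqref{eq:36}.

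\emph{Main obstacle.} The delicate step is justifying the derivative formula. It comes from differentiating the asymptotic expansion of $e^{-t\psi(n+s)}$ in $s$ versus in $n$, which is an identity of formal series whose coefficients depend polynomially on $s$. One must confirm it holds for all $s$, in particular at $s=1$, and not just generically. I would handle this by noting that both sides of the coefficient identity are polynomials in $s$ and $t$, so the identity holds identically once it holds formally. Beyond that, the argument is pure bookkeeping of signs and of the vanishing products.
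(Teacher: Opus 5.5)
Your proof is correct and is essentially the paper's argument. Both combine the iterated derivative formula from Proposition~\ref{lem6}, whose product $\prod_{\ell=1}^k(\ell-q)$ truncates the exact Taylor expansion at $k=q-1$, with the reflection identity of Proposition~\ref{lem5}; the only difference is that the paper expands about $s=0$ and evaluates at $s=1$, applying Proposition~\ref{lem5} to every coefficient $p_{m-k,S_2(0)}(q-m)$, whereas you expand about $s=1$ and evaluate at $s=0$, so you apply it only once, to $p_{m,S_2(0)}(q-m)$.
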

\begin{proof}
    From Proposition \ref{lem6} we have that $p_{m,S_2(s)}(q-m) = \sum_{k=0}^{q-1}\tfrac{A_k}{k!}s^k$, and one computes (with the product understood as equal to $1$ for $k=0$)
\begin{align}
   A_k = \frac{\partial^k}{\partial s^k}p_{m,S_2(s)}(q-m)\Bigg|_{s=0} = \Bigg[\prod_{\ell=1}^k(\ell-q)\Bigg] p_{m-k,S_2(0)}(q-m)= (-1)^{m-k}\Bigg[\prod_{\ell=1}^k (\ell-q)\Bigg] p_{m-k,S_2(1)}(q-m),
\end{align}
where we have used Proposition \ref{lem5} in the last equality. This proves the result upon setting $s = 1$ and considering the parity of $m$ in $p_{m,S_2(s)}(q-m) = \sum_{k=0}^{q-1}\tfrac{A_k}{k!}s^k$.
\end{proof}

Of most importance for our work will be the case of odd $m\geq1$ with $q=2$:
\begin{equation}\label{Eq:75}
p_{m,S_2(1)}(2-m)=-\frac{1}{2} p_{m-1,S_2(1)}(2-m).
\end{equation}

\begin{remark}\label{Remark2}
Note that $p_{j,q S}(t)=p_{j,S}(q t)$ by definition and \cite[Eq. (2.18)]{OS22}, so that we can include the analysis of the sequence associated with our first operator, $A_1$, via the identification $S_1=2 S_2(1/2)$, ignoring the $k=-1,0$ terms. This amounts to considering $p_{j,S_1}(T)=p_{j,S_2(1/2)}(2T)$, i.e., $s=1/2$ and $t=2T$, in the analysis here. Setting $T=\frac{1}{2}-m$ and $j=2m$ yields that Open Problem \ref{Problem1} is equivalent to showing $p_{j,S_2(s)}(1-j)\neq 0$ for even $j\geq 2$ for some, hence every, $s$, via Proposition \ref{lem6}.

Moreover, by \eqref{Eq:75}, this is equivalent to showing that $p_{j+1,S_2(1)}(1-j)\neq0$ for even $j\geq 2$, whereas Open Problem \ref{Problem2} $(i)$ is equivalent to showing that $p_{j+2,S_2(1)}(-j)\neq0$ for  even $j\geq 2.$ Therefore, Open Problems \ref{Problem1} and \ref{Problem2} $(i)$ can be surprisingly combined into one conjecture: $p_{j+2,S_2(1)}(-j)\neq0$ for all $j\in\mathbb{N}$.
\end{remark}

\subsection{The easy cases}

We now turn to the explicit analytic continuation of $K_1(t,\varphi,\theta)$ followed by special cases of $K_2(t,x,y)$ which follow similarly. For specific notation, we once again refer to Section \ref{S:Notation}.

The following result includes Theorem \ref{Thm:Main3} upon verifying the residue of $K_1(t,\varphi,\varphi)$ for $t=\tfrac{1}{2}$ is nonzero.

\begin{prop}
Let the sequence $S_1=\{s_n^{(1)}\}_{n=-1}^\infty$ satisfy
\begin{align}
 s_{-1}^{(1)}=2,\ s_0^{(1)}=4\ln(2)+2\gamma_E,\ s_k^{(1)}=(1-2^{1-k})\frac{2 B_{k}}{k}=-\frac{2}{k}B_k(1/2),\quad k\in\mathbb{N}.
\end{align} 
Then for each $M\in\mathbb{N}$ and $\Re(t) > \frac{1-M}{2}$, one has for $\varphi\in[0,2\pi)$,
\begin{align}\label{Eq:29a}
    K_1(t,\varphi,\varphi) = \frac{1}{2\pi}+\frac{e^{-[4\ln(2)+2\gamma_E] t}}{\pi}\sum_{j=0}^{M-1}p_{j,S_1}(t)\zeta_R(j+2t)+F_{M,1}(t,\varphi),
\end{align}
while for $\varphi,\theta\in[0,2\pi)$, $\varphi\neq\theta$,
\begin{align}\label{Eq:29aa}
    K_1(t,\varphi,\theta) = \frac{1}{2\pi}+\frac{e^{-[4\ln(2)+2\gamma_E] t}}{2\pi}\sum_{j=0}^{M-1}p_{j,S_1}(t)\big[\Li_{2t+j}(e^{i(\varphi-\theta)})+\Li_{2t+j}(e^{-i(\varphi-\theta)})\big]+ F_{M,2}(t,\varphi,\theta),
\end{align}
where $p_{j,S_1}(t)$ are as given in \eqref{Eq:12a} with $S_1$ as above, and, for each fixed $\varphi,\theta$ given as above, $F_{M,1}(t,\varphi)$ and $F_{M,2}(t,\varphi,\theta)$ are analytic for $\Re(t) > \frac{1-M}{2}$.

In particular, $K_1(t,\varphi,\theta)$ can be analytically continued to an entire function of $t$ for each $\varphi,\theta\in[0,2\pi)$ such that $\varphi\neq\theta$ whereas for each $\varphi\in[0,2\pi)$, $K_1(t,\varphi,\varphi)$ can be continued to a meromorphic function with possible simple poles at $t=\frac{1}{2}-m$, $m\in\mathbb{N}_0$ with corresponding residues given by $\frac{e^{-2\ln(2)-\gamma_E}}{2\pi}e^{[4\ln(2)+2\gamma_E] m}p_{2m,S_1}(\frac{1}{2}-m)$.
\end{prop}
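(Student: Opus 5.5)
Our plan is to apply Lemma \ref{Lem:GenAsymp} with $\mu_n=2\lambda_n$ and $S=S_1$. The first step is the asymptotic expansion of $\lambda_n=2H_{2n}-H_n$. Using $H_m=\psi(m+1)+\gamma_E$, we have $2\lambda_n=4\psi(2n+1)-2\psi(n+1)+2\gamma_E$, and the duplication formula gives $\psi(2n)=\tfrac12\psi(n)+\tfrac12\psi(n+\tfrac12)+\ln 2$. A cleaner route is to write $\sum_{j=0}^{n-1}(2j+1)^{-1}=\tfrac12[\psi(n+\tfrac12)-\psi(\tfrac12)]$. Since $\psi(\tfrac12)=-\gamma_E-2\ln 2$, this yields
\[
2\lambda_n=2\psi(n+\tfrac12)+4\ln 2+2\gamma_E .
\]
Inserting the expansion $\psi(n+s)\sim\ln n+\sum_{k\ge1}\tfrac{(-1)^kB_k(s)}{k}n^{-k}$ at $s=\tfrac12$, and using $B_k(\tfrac12)=0$ for odd $k$, so that $(-1)^kB_k(\tfrac12)=B_k(\tfrac12)$ in every nonzero term, we obtain $s_{-1}=2$, $s_0=4\ln2+2\gamma_E$ and $s_k=\tfrac{2}{k}B_k(\tfrac12)$. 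This disagrees in sign with the stated $-\tfrac{2}{k}B_k(\tfrac12)$; we would reconcile the two through the sign convention in $e^{-t\mu_n}$ versus $\exp(-t\sum s_kn^{-k})$ in Lemma \ref{Lem:GenAsymp} and recheck the identity $(1-2^{1-k})B_k=-B_k(\tfrac12)$. In any case the polynomials $p_{j,S_1}$ vanish for odd $j$ by Lemma \ref{Lem:Pjident}, and they are even in the sign of $S$ for even $j$ (by \cite[Eq. (2.18)]{OS22}, since $p_{j,-S}(t)=p_{j,S}(-t)$ combined with $\widehat B_{j,k}(-S)=(-1)^k\widehat B_{j,k}(S)$), so this bookkeeping does not affect the statement.

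Next, fix $M$ and a compact set $K\subset\{\Re t>\tfrac{1-M}{2}\}$. Lemma \ref{Lem:GenAsymp} gives
\[
e^{-2\lambda_nt}=n^{-2t}e^{-s_0t}\sum_{j<M}p_{j,S_1}(t)n^{-j}+r_n(t),\qquad r_n(t)=O\bigl(n^{-2\Re t-M}\bigr)
\]
uniformly on $K$. Substituting this into \eqref{Eq:SpecSer} and grouping $n$ with $-n$, the diagonal case gives $\tfrac1\pi\sum_{n\ge1}e^{-2\lambda_nt}$. This produces the terms $\tfrac{e^{-s_0t}}{\pi}p_{j,S_1}(t)\zeta_R(j+2t)$ plus $F_{M,1}=\tfrac1\pi\sum_n r_n(t)$. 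The latter series converges uniformly on $K$ because $2\Re t+M>1$, so by Weierstrass' theorem $F_{M,1}$ is analytic there. The off-diagonal case is identical, with $n^{-2t-j}$ paired with $e^{\pm in(\varphi-\theta)}$, producing $\Li_{2t+j}(e^{\pm i(\varphi-\theta)})$. The remainder is handled the same way, since $|e^{in(\varphi-\theta)}|=1$.

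Finally we let $M\to\infty$; the representations for different $M$ agree on overlaps, giving a global continuation. For $e^{i\omega}\ne1$, the polylogarithm $\Li_s(e^{i\omega})$ is entire in $s$; this is standard, for instance via the Hurwitz/Lerch zeta representation or by summation by parts showing that the series converges for all $s$ after repeated Abel summation. Hence the off-diagonal kernel is entire. On the diagonal, $\zeta_R(j+2t)$ has only a simple pole at $j+2t=1$, i.e. $t=\tfrac{1-j}{2}$, with residue $\tfrac12$ in $t$. For odd $j$ the coefficient $p_{j,S_1}$ vanishes identically by Lemma \ref{Lem:Pjident}, because the odd-indexed entries $B_k(\tfrac12)$ of $S_1$ are zero. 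Writing $j=2m$, the pole sits at $t=\tfrac12-m$ with residue
\[
\frac{1}{2\pi}e^{-s_0(\frac12-m)}p_{2m,S_1}\bigl(\tfrac12-m\bigr)=\frac{e^{-2\ln2-\gamma_E}}{2\pi}e^{[4\ln2+2\gamma_E]m}p_{2m,S_1}\bigl(\tfrac12-m\bigr).
\]

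The only genuinely delicate point is the exact identification of $S_1$ (the digamma rewriting of $\lambda_n$ and its sign conventions). The analytic-continuation argument itself is routine once Lemma \ref{Lem:GenAsymp} is available. At $t=0$ no pole arises, since all poles lie at half-integers; this gives the regularity claim.
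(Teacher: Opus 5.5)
Your analytic argument is the paper's: apply Lemma~\ref{Lem:GenAsymp} to $\mu_n=2\lambda_n$, turn the leading terms into $\zeta_R(j+2t)$ or $\Li_{2t+j}(e^{\pm i(\varphi-\theta)})$, control the tail uniformly on $\Re(t)>\frac{1-M}{2}$, discard odd $j$ via Lemma~\ref{Lem:Pjident}, and read off the residues. Your repeated-summation-by-parts argument for entireness of $\Li_s(e^{i\omega})$ is an acceptable substitute for \cite[Eq.~25.12.12]{DLMF}. The gap is in the identification of $S_1$.

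The expansion you inserted has the wrong sign on the sum. It appears in that form in the display in Section~\ref{Sect:poly}, but it is off by a sign. The correct expansion is
\[
\psi(n+s)\sim\ln n+\sum_{k\ge1}\frac{(-1)^{k+1}B_k(s)}{k}\,n^{-k}.
\]
You can check this at $s=1$: it must reproduce $\psi(n+1)=H_n-\gamma_E\sim\ln n+\frac{1}{2n}-\frac{1}{12n^2}+\cdots$, whereas the $(-1)^k$ version gives $-\frac{1}{2n}$. At $s=\frac12$ the odd terms vanish and the even ones carry a factor $-1$. Your (correct) identity $2\lambda_n=2\psi(n+\frac12)+4\ln2+2\gamma_E$ then gives exactly the stated $s_k^{(1)}=-\frac{2}{k}B_k(\frac12)$. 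For example, $2\lambda_n=2\ln n+4\ln 2+2\gamma_E+\frac{1}{12n^2}+O(n^{-4})$, which also follows from $2(2H_{2n}-H_n)$ and the expansion of $H_m$; that is the route the paper takes. So there is nothing to reconcile.

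The step that actually fails is your fallback claim that the sign ``does not affect the statement.'' The facts you cite, $\widehat B_{j,k}(-S)=(-1)^k\widehat B_{j,k}(S)$ and hence $p_{j,-S}(t)=p_{j,S}(-t)$, are true. But $p_{j,S_1}$ is not even in $t$, so flipping the sign of $S_1$ changes the residues. Already $p_{2,S_1}(t)=-s_2^{(1)}t=-\frac{t}{12}$. Hence the residue at $t=-\frac12$ is $\frac{e^{\gamma_E}}{12\pi}$ with the stated $S_1$ and $-\frac{e^{\gamma_E}}{12\pi}$ with your sign. The parity identity \cite[Eq.~(2.19)]{OS22} concerns flipping only the odd-indexed entries, which is vacuous for $S_1$.

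As written, then, your proposal would prove the residue formula with $p_{2m,S_1}(m-\frac12)$ in place of $p_{2m,S_1}(\frac12-m)$, or else assert the stated one without justification. Correcting the sign in the digamma expansion closes the gap. After that, your $\psi(n+\frac12)$ derivation is a tidy alternative to expanding $2H_{2n}-H_n$ term by term.
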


\begin{proof}
We begin by recalling (cf. Lemma 12.2.2 in \cite{MNP00}) that the eigenvalues for this problem are given by $2\lambda_n = 4\sum_{j=0}^{n-1}(2j+1)^{-1} = 2(2H_{2n}-H_n)$. Therefore we have the asymptotic expansion 
\begin{align}
    2\lambda_n \sim 2\ln n +4\ln(2)+2\gamma_E+\sum_{k \geq 1}\frac{B_{2k}}{k}(1-2^{1-2k})n^{-2k} = s_{-1}^{(1)}\ln(n)+\sum_{k\in \N_0}s_k^{(1)} n^{-k},
\end{align}
agreeing with the definition of $S_1$ above.
Moreover, each $\lambda_n$ has corresponding normalized eigenfunctions $\frac{1}{\sqrt{2\pi}}e^{\pm i n\varphi}$. Therefore, using Lemma \ref{Lem:GenAsymp} and summing over $n \in\mathbb{Z}\backslash \{0\}$ (noting this becomes two times the sum over $n\in\mathbb{N}$) yields the representation \eqref{Eq:29a} for $K_1(t,\varphi,\varphi)$ with $\varphi \in [0,2\pi)$ except for the fact that the sum over the remainder terms, $F_{M,1}(t,\varphi)$, is analytic in $t$ for $\Re(t)>(1-M)/2$. However, this follows from the typical arguments since we know for $\Re(t)\geq\sigma>(1-M)/2$ the remainder terms can be dominated by the series with terms $C n^{-2\sigma-M+1}$. Thus the sum over the remainder terms converges absolutely and locally uniformly on the half-plane $\Re(t)>(1-M)/2$, yielding that $F_{M,1}(t,\varphi)$ is analytic on this half-plane.
The sum of Riemann zeta functions now yields possible simple poles at $t = \frac{1-\ell}{2}$ for $\ell \in \N_0$ with corresponding residues $\frac{e^{-s_0^{(1)}/2}}{2\pi}e^{s_0^{(1)}\ell/2}p_{\ell,S_1}(\tfrac{1-\ell}{2})$ (as the Riemann $\zeta$-function contributes $1/2$ to the residues). But since $s_{2k-1}^{(1)}=0$ for all $k \in \N$, by Lemma \ref{Lem:Pjident} we have that $p_{\ell,S_1}\equiv 0$ for $\ell$ odd, so that the apparent simple poles at $\frac{1-\ell}{2}$ for $\ell$ odd are in fact absent, yielding the result given above.

Similarly, if $\varphi,\theta\in[0,2\pi)$, $\varphi\neq\theta$, we have the representation
\begin{align}
    K_1(t,\varphi,\theta) = \frac{1}{2\pi}+\frac{e^{-[4\ln(2)+2\gamma_E] t}}{2\pi}\sum_{j=0}^{M-1}p_{j,S_1}(t)\sum_{n \in \Z\setminus\{0\}}e^{in(\varphi-\theta)}|n|^{-2t-j} + F_{M,2}(t,\varphi,\theta),
\end{align}
where for any $\varphi,\theta$ given as above, $F_{M,2}(t,\varphi,\theta)$ is also analytic for $\Re(t) > \frac{1-M}{2}$. Setting $\varphi-\theta = \nu$, this reduces to studying
\begin{align}
    \sum_{n\in \Z \setminus \{0\}} e^{in\nu}|n|^{-2t-j} = \sum_{n \in \N}(e^{in\nu }+e^{-in\nu})\,n^{-2t-j} = \Li_{2t+j}(e^{i\nu})+\Li_{2t+j}(e^{-i\nu}),
\end{align}
where $\Li_s(z) :=\sum_{n\in \N}\frac{z^n}{n^s}$ is the usual polylogarithm function. From the Hankel contour representation of the polylogarithm, one obtains the series for $\Im(\mu) \in (0,2\pi)$ \cite[Eq. 25.12.12]{DLMF}
\begin{align}
    \Li_s(e^\mu) = \Gamma(1-s)(-\mu)^{s-1}+\sum_{k\in \N_0}\frac{\zeta_R(s-k)}{k!}\mu^k.
\end{align}
One now readily checks that the apparent poles of the $\Gamma$-function and the Riemann zeta functions at $s=n\in\mathbb{N}$ in fact cancel so that the polylogarithm is entire. Therefore, as a composition of entire functions of $t$, we have that $\Li_{2t+j}(e^{\pm i\nu})$ is entire in $t$ for all $j \in \N_0$, and the result follows.
\end{proof}

Next we consider two edge cases of $K_2(t,x,y)$ which yield explicit formulas in terms of well-known zeta functions similarly to the previous result for $K_1$.

\begin{lem}\label{Lem:Spec}
Let the sequence $S_2=\{s_n^{(2)}\}_{n=-1}^\infty$ satisfy
\begin{align}
 s_{-1}^{(2)}=1,\ s_0^{(2)}=\gamma_E,\ s_k^{(2)}=-B_k/k,\quad k\in\mathbb{N}.
\end{align} 
Then for each $M\in\mathbb{N}$, one has for $a\in\{-1,1\}$ and $\Re(t)>2-M$,
\begin{align}
&K_2(t,a,a)=\frac{1}{2}+e^{-t\gamma_E}\sum_{j=0}^{M-1}p_{j,S_2}(t)\bigg[\zeta_R(t+j-1)+\frac{1}{2}\zeta_R(t+j)\bigg]+G_{M,1}(t,a), \label{Eq:Mero}\\
&K_2(t,a,-a)=\frac{1}{2}-e^{-t\gamma_E}\sum_{j=0}^{M-1}p_{j,S_2}(t)\bigg[\eta(t+j-1)+\frac{1}{2}\eta(t+j)\bigg]+G_{M,2}(t,a),\label{Eq:Ent}
\end{align}
where $p_{j,S_2}(t)$ are as given in \eqref{Eq:12a} with $S_2$ as above and, for each $a\in\{-1,1\}$, $G_{M,m}(t,a)$, $m=1,2,$ are analytic for $\Re(t)>2-M$.

In particular, $K_2(t,a,-a)$ can be analytically continued to an entire function of $t$ whereas $K_2(t,a,a)$ can be continued to a meromorphic function which has a simple pole at $t=2$ with residue $e^{-2\gamma_E}$ while other possible simple poles occur at $t=-2m$, $m\in\mathbb{N}$ with residues $e^{2m\gamma_E}p_{2+2m,S_2}(-2m)$.
\end{lem}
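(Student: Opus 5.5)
We follow the same route as for $K_1$, now with Legendre data at the endpoints. Since $P_n(1)=1$ and $P_n(-1)=(-1)^n$, we have $P_n(a)P_n(a)=1$ and $P_n(a)P_n(-a)=(-1)^n$ for $a\in\{-1,1\}$. Hence
\begin{align*}
K_2(t,a,\pm a)=\tfrac12+\sum_{n\in\mathbb{N}}(\pm1)^n\big(n+\tfrac12\big)e^{-tH_n}.
\end{align*}
The eigenvalues are $H_n=\psi(n+1)+\gamma_E$, with asymptotic expansion $\ln n+\gamma_E+\sum_{k\ge1}(-1)^kB_k(1)k^{-1}n^{-k}$. Using $B_k(1)=(-1)^kB_k$, this is exactly $s^{(2)}_{-1}\ln n+\sum_{k\ge0}s^{(2)}_kn^{-k}$ with $S_2$ as stated, as already recorded in \eqref{Eq:68}. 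Lemma \ref{Lem:GenAsymp} then gives
\begin{align*}
e^{-tH_n}=e^{-\gamma_Et}\sum_{j=0}^{M-1}p_{j,S_2}(t)\,n^{-t-j}+O\big(n^{-\Re(t)-M}\big),
\end{align*}
uniformly on compact sets. Multiplying by $(\pm1)^n(n+\frac12)$ and summing over $n$, the main terms become $\sum_n(\pm1)^n n^{-(t+j-1)}$ and $\frac12\sum_n(\pm1)^n n^{-(t+j)}$. In the $+$ case these are $\zeta_R(t+j-1)$ and $\frac12\zeta_R(t+j)$. In the $-$ case they are $-\eta(t+j-1)$ and $-\frac12\eta(t+j)$. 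This yields \eqref{Eq:Mero} and \eqref{Eq:Ent}.

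The remainder is dominated by $C\,n^{1-\sigma-M}$ on $\Re(t)\ge\sigma$, so it converges absolutely and locally uniformly when $\sigma>2-M$. Therefore $G_{M,m}$ is analytic on $\Re(t)>2-M$. Since $M$ is arbitrary, these formulas give the continuation to the whole plane.

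For $K_2(t,a,-a)$, the function $\eta$ is entire and the $p_{j,S_2}$ are polynomials, so the continuation is entire. For $K_2(t,a,a)$, $\zeta_R(t+j-1)$ has a simple pole at $t=2-j$ with residue $1$, and $\frac12\zeta_R(t+j)$ has one at $t=1-j$ with residue $\frac12$. Collecting the terms at $t=2-\ell$ gives the residue
\begin{align*}
e^{-(2-\ell)\gamma_E}\Big[p_{\ell,S_2}(2-\ell)+\tfrac12\,p_{\ell-1,S_2}(2-\ell)\Big],
\end{align*}
where the second term is absent when $\ell=0$. At $\ell=0$ this is $e^{-2\gamma_E}$, because $p_{0,S_2}\equiv1$.

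The key step is cancelling the apparent poles. For odd $\ell\ge1$, identity \eqref{Eq:75} gives $p_{\ell,S_2(1)}(2-\ell)=-\frac12p_{\ell-1,S_2(1)}(2-\ell)$, so the residue vanishes; this covers $t=1,-1,-3,\dots$. For even $\ell=2+2m$ with $m\ge0$, we need the second term to vanish, which reads $p_{2m+1,S_2}(-2m)=0$. This is Corollary \ref{cor1} with $j=2m+1$, since then $1-j=-2m$. Hence at $t=-2m$ only $e^{2m\gamma_E}p_{2+2m,S_2}(-2m)$ survives. The case $\ell=1$ (the pole at $t=1$) needs $p_{1,S_2}(1)=-\frac12p_{0,S_2}(1)$. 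Here $p_{1,S_2}(t)=-s_1t=B_1t=-t/2$, so this holds; it is also \eqref{Eq:75} with $m=1$.

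The main obstacle is this bookkeeping of indices: the cancellations must line up exactly with \eqref{Eq:75} and Corollary \ref{cor1}, using $S_2=S_2(1)$. The analytic estimates themselves are routine.
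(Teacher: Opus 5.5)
Your argument follows the paper's proof step for step. You expand $e^{-tH_n}$ via Lemma \ref{Lem:GenAsymp}, sum against $(\pm1)^n(n+\frac12)$ to get the $\zeta_R$ and $\eta$ main terms, and read off the residue $e^{-(2-\ell)\gamma_E}\big[p_{\ell,S_2}(2-\ell)+\frac12p_{\ell-1,S_2}(2-\ell)\big]$ at $t=2-\ell$. The odd $\ell$ are then killed with \eqref{Eq:75}, and the second term for even $\ell$ with Corollary \ref{cor1}. Your direct check of $\ell=1$ is worth keeping, since Proposition \ref{lem7} is only stated for $q\le m$.

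There is one step missing. At $\ell=2$, i.e.\ $t=0$, Corollary \ref{cor1} removes $\frac12p_{1,S_2}(0)$, but $p_{2,S_2}(0)$ is left over. Your write-up stops at ``only $e^{2m\gamma_E}p_{2+2m,S_2}(-2m)$ survives'' with $m\ge0$. So as written you have only shown that the extra possible poles lie at $t=-2m$ with $m\in\mathbb{N}_0$, not $m\in\mathbb{N}$ as the lemma claims. The regularity of $t=0$ asserted in Theorem \ref{Thm:Main} depends on excluding this case. The fix is the observation the paper makes: $\widehat{B}_{j,0}=0$ for $j\ge1$, so every $p_{j,S}$ with $j\ge1$ is divisible by $t$ and $p_{j,S}(0)=\delta_{j,0}$. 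Concretely, $p_{2,S_2}(t)=\frac{t}{12}+\frac{t^2}{8}$.

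A smaller point: your displayed expansion of $H_n$ has the $k\ge1$ terms with the wrong sign. Since $(-1)^kB_k(1)=B_k$, it reads $\ln n+\gamma_E-\frac{1}{2n}+\frac{1}{12n^2}-\cdots$, whereas $H_n\sim\ln n+\gamma_E+\frac{1}{2n}-\frac{1}{12n^2}+\cdots$. Differentiating \cite[Eq.~5.11.8]{DLMF} gives $\psi(n+s)\sim\ln n-\sum_{k\ge1}\frac{(-1)^kB_k(s)}{k}n^{-k}$. The correct expansion $\ln n+\gamma_E-\sum_{k\ge1}\frac{B_k}{k}n^{-k}$ is what actually matches $s^{(2)}_k=-B_k/k$. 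It is also what you use later, e.g.\ in $p_{1,S_2}(t)=-t/2$, so nothing downstream changes. It is cleaner to check $S_2$ directly against the standard expansion of $H_n$, as the paper does.
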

\begin{proof}
First notice that the definition of $S_2$ agrees with the asymptotics of the harmonic numbers, $H_n \sim \ln(n) +\gamma_E+\frac{1}{2n}-\sum_{k\geq 1}\frac{B_{2k}}{2k}n^{-2k}$ as $n \rightarrow \infty$.

We now begin by analyzing $K_2(t,a,a)$ by summing over $n\in\mathbb{N}$ in \eqref{Eq:6a} after multiplying by $(2n+1)/2$ as $P^2_n(\pm1)=1$. This immediately yields \eqref{Eq:Mero} except for the fact that the sum over the remainder terms, $G_{M,1}(t,a)$, is analytic in $t$ for $\Re(t)>2-M$. However, this follows similarly to before since we know for $\Re(t)\geq\sigma>2-M$ the remainder terms can be dominated by the series with terms $C n^{-\sigma-M+1}$. Thus the sum over the remainder terms converges absolutely and locally uniformly on the half-plane $\Re(t)>2-M$, yielding that $G_{M,1}(t,a)$ is analytic on this half-plane. The statement regarding the structure of the analytic continuation now follows from these representations since they hold for each $M\in\mathbb{N}$ allowing one to move the half-plane $\Re(t)>2-M$ through the entire complex $t$-plane. In particular, as $\zeta_R(s)$ has a simple pole at $s=1$ with residue equal to $1$, one sees that there are possible simple poles at $t=2-m$, $m\in\mathbb{N}_0$ with corresponding residues given by $e^{-2\gamma_E}$ (as $p_{0,S_2}(t)=1$) for $m=0$ and, otherwise,
    \begin{equation}\label{Eq:50a}
    e^{-(2-m)\gamma_E}\bigg[p_{m,S_2}(2-m)+\frac{1}{2}p_{m-1,S_2}(2-m)\bigg],\quad m\in\mathbb{N}.
    \end{equation}
Notice that \eqref{Eq:75} implies that the residues for odd $m$ are zero, so none of these are poles. Moreover, $m=2$ results in a zero residue since $p_{j,S_2}(0)=\delta_{j,0}$, reducing the possible simple poles to $t=2$ and $t=2-m$ with $m\in\mathbb{N}$ even.
Finally, Corollary \ref{cor1} implies that the second term in \eqref{Eq:50a} is zero for $m\in\mathbb{N}$ even, finishing the analysis of $K_2(t,a,a)$.

The analysis of $K_2(t,a,-a)$ follows analogously after multiplying \eqref{Eq:6a} by $(2n+1)/2$ and $P_n(-1)=(-1)^n$ which yields negative one times appropriate Dirichlet eta functions once summing over $n\in\mathbb{N}$ rather than Riemann zeta functions. As $\eta(s)$ is entire, we conclude that $K_2(t,a,-a)$ is entire as well.
\end{proof}

\subsection{General setting}

We next include the Legendre polynomials directly into our analysis of $K_2$, which proves to be much more involved and interesting. We break the proof into two parts by first showing the analytic structure and then carefully calculating the residues.

\begin{proof}[Proof of Theorem \ref{Thm:Main} (Analytic structure).]
First notice that Lemma \ref{Lem:Spec} directly treats the cases whenever $x,y\in\{-1,1\}$.
Thus we can safely assume $x,y\in[-1,1],$ $(x,y)\neq(\pm1,\pm1),(\pm1,\mp1)$ throughout the proof.

To treat the general setting, note it suffices to utilize the sequence $S_2$ defined in \eqref{Eq:S2}, then multiply \eqref{Eq:6a} by $[(2n+1)/2]P_n(x)P_n(y)$ and consider only the terms contributing to the summation over $n$ for each fixed $M\in\mathbb{N}$. That is, we fix $M\in\mathbb{N}$ and let $j\in\{0,1,\dots,M-1\}$ to consider
\begin{align}\label{Eq:Hilbsconv}
\begin{split}
h_j(t,x,y)=\sum_{n\in \N}\frac{2n+1}{2}\frac{P_n(x)P_n(y)}{n^{t+j}},\quad \Re(t)>-j+1-(1/2)(1-\delta_{x,\pm1}+1-\delta_{y,\pm1})(1-\delta_{x,y}),&\\
x,y\in[-1,1],\ (x,y)\neq(\pm1,\pm1),(\pm1,\mp1),&
\end{split}
\end{align}
where $\delta_{i,j}$ denotes the Kronecker delta function and the region of convergence follows from the leading term and oscillations of Hilb's formula for the asymptotics of $P_n(x)$ (cf. Thm. 8.21.2 in \cite{Sz39}).

In our analysis below it will be convenient to break \eqref{Eq:Hilbsconv} into two sums via splitting the first fraction and simply consider (for the same range of $x,y$ values as before throughout the following)
\begin{equation}\label{Eq:tildeh}
\widetilde{h}_j(t,x,y)=\sum_{n\in \N}\frac{P_n(x)P_n(y)}{n^{t+j}},\quad \Re(t)>-j-(1/2)(1-\delta_{x,\pm1}+1-\delta_{y,\pm1})(1-\delta_{x,y}),
\end{equation}
noting that $h_j(t,x,y)=\widetilde{h}_{j-1}(t,x,y)+\tfrac{1}{2}\widetilde{h}_j(t,x,y)=\widetilde{h}_{j}(t-1,x,y)+\tfrac{1}{2}\widetilde{h}_j(t,x,y)$. We point out that this is a Dirichlet series with Legendre polynomial coefficients, though we will not use this fact in what follows.

Now recalling the Mellin transform
\begin{equation}
n^{-w}=\frac{1}{\Gamma(w)}\int_0^\infty s^{w-1}e^{-ns}\, ds,\quad \Re(w)>0,
\end{equation}
allows one to rewrite $\widetilde{h}_j(t,x,y)$ as (comparing $\Re(t+j)>0$ and the $t$ half-plane in \eqref{Eq:tildeh})
\begin{equation}
\widetilde{h}_j(t,x,y)=\frac{1}{\Gamma(t+j)}\int_0^\infty s^{t+j-1}\sum_{n\in \N} P_n(x)P_n(y) e^{-ns}\, ds,\quad \Re(t)>-j.
\end{equation}
Since $|e^{-ns}|<1$ for $n\in\mathbb{N}$, $s>0$, we can utilize the generating function derived in \cite{Max56}\footnote{We note that although the result in \cite{Max56} is correct, there is a typo in equation (12) of that paper, where the instance of $\cos^2\alpha$ appearing in the formulas for $A^2$ and $C^2$ should be replaced by $\sin^2\beta$.}:
\begin{equation}
\sum_{n\in\mathbb{N}_0}P_n(\cos\, \alpha) P_n(\cos\, \beta) z^n=\frac{\mathstrut_2 F_1(\tfrac{1}{2},\tfrac{1}{2};1;\tfrac{4z\sin(\alpha) \sin(\beta)}{1-2z\cos(\alpha+\beta)+z^2})}{\sqrt{1-2z\cos(\alpha+\beta)+z^2}},\quad |z|<1,\ \alpha,\beta\in[0,\pi],
\end{equation}
where $\mathstrut_2 F_1(a,b;c;q)$ is the hypergeometric function \cite[Sect. 15]{DLMF}. Hence, subtracting off the $n=0$ term and letting $z=e^{-ns}$ yields for $\alpha,\beta\in[0,\pi]$, $(\alpha,\beta)\neq(0,0),(0,\pi),(\pi,0),(\pi,\pi)$,
\begin{equation}\label{Eq:hint}
\widetilde{h}_j(t,\cos(\alpha),\cos(\beta))=\frac{1}{\Gamma(t+j)}\int_0^\infty s^{t+j-1}\bigg(\frac{\mathstrut_2 F_1(\tfrac{1}{2},\tfrac{1}{2};1;\tfrac{4e^{-s}\sin(\alpha) \sin(\beta)}{1-2e^{-s}\cos(\alpha+\beta)+e^{-2s}})}{\sqrt{1-2e^{-s}\cos(\alpha+\beta)+e^{-2s}}}-1\bigg)\, ds.
\end{equation}

Now as we are concerned with the possible singular structure with respect to $t$, we need to consider the large and small $s$ behavior of the integrand separately since $t$ only appears in the power of $s$ (and the reciprocal gamma function which is entire). We begin with the large $s$ behavior by first noting that since the argument of the hypergeometric function goes to 0 as $s\to\infty$ (and is identically $0$ if $\alpha$ and/or $\beta$ equals $0$ or $\pi$), the hypergeometric function behaves like 1 to leading order.
This implies that for large $s$ one has that the integrand is exponentially decaying since
\begin{equation}
s^{t+j-1}\bigg(\frac{1}{\sqrt{1-2a e^{-s}+e^{-2s}}}-1\bigg)=s^{t+j-1}\big(ae^{-s}+O(e^{-2s})\big).
\end{equation}
Therefore, away from $s=0$, the integral is analytic in $t$, so it suffices to consider the small $s$ behavior to probe any singularities with respect to $t$ of $\widetilde{h}_j$. 

We begin by noticing that since $\cos(\alpha+\beta)\neq1$ for any choices of $\alpha,\beta$ considered here, one has for the square root a series expansion of the form\footnote{Note that if the cases $\alpha=\beta=0,\pi$, that is, $x=y=\pm1$, are included in the analysis here, this expansion needs to include the term $1/s$. When integrating near small $s$ below, this additional singularity as $s\downarrow0$ is exactly the term that will cause the poles which were observed in Lemma \ref{Lem:Spec} to appear. As such, one can readily includes these cases here with this case distinction.}
\begin{equation}\label{Eq:squareroot}
\frac{1}{\sqrt{1-2e^{-s}\cos(\alpha+\beta)+e^{-2s}}}=\sum_{k\in\mathbb{N}_0}c_ks^k.
\end{equation}
Hence the problem has been reduced to finding the series expansion for the hypergeometric function appearing in the integrand.

We now focus on the argument of the hypergeometric function in the integral formula for $\widetilde{h}_j$ (where we exclude the cases $\alpha$ and/or $\beta$ equals $0$ or $\pi$ as the hypergeometric function considered here simply becomes identically equal to 1):
\begin{equation}\label{Eq:Halpha}
H_{\alpha,\beta}(s)=\frac{4e^{-s}\sin(\alpha) \sin(\beta)}{1-2e^{-s}\cos(\alpha+\beta)+e^{-2s}}=\frac{4e^{-s}\sin(\alpha) \sin(\beta)}{\big|1-e^{i(\alpha+\beta)}e^{-s}\big|^2},\quad \alpha,\beta\in(0,\pi).
\end{equation}
Note that for any fixed $\alpha,\beta\in(0,\pi)$, $H_{\alpha,\beta}(s)$ is a well-defined, decreasing, positive function of $s$ on $[0,1]$.
Since we are concerned with the $s\downarrow0$ behavior, first notice that we will be strictly inside the radius of convergence for the hypergeometric function provided $|H_{\alpha,\beta}(s)|=H_{\alpha,\beta}(s)<1$, When this occurs, we can simply expand by composing the power series of the hypergeometric function and that of $H_{\alpha,\beta}(s)$ to once again obtain a power series with terms $s^{k}$, $k\in\mathbb{N}_0$. The resulting series can then be integrated term-by-term.
Analyzing the restriction on $\alpha,\beta\in(0,\pi)$ for $H_{\alpha,\beta}(s)<1$ now reduces to considering $H_{\alpha,\beta}(0)$ by our previous decreasing observation. That is, we must exclude the case
\begin{equation}
2\sin(\alpha)\sin(\beta)= 1-\cos(\alpha+\beta) \Leftrightarrow \cos(\alpha-\beta)=1,
\end{equation}
that is, all $\alpha,\beta\in(0,\pi)$ can be treated by the above arguments except for the case $\alpha=\beta$. 
This implies that for $\alpha\neq\beta$ the small $s$ behavior of the integrand yields a series in $s^{t+j-1+k}$ with $k\in\mathbb{N}_0$, which, after integrating term-by-term, amounts to having simple poles exactly at $t+j=-k$ for the integral. For $\widetilde{h}_j(t,x,y)$, these apparent simple poles are exactly canceled by the simple zeros of the reciprocal gamma function in \eqref{Eq:hint}; hence, the resulting function is entire in $t$ for $\alpha,\beta\in[0,\pi]$, $\alpha\neq\beta$ (where the endpoints $0,\pi$ are included since, as we observed above, in those cases the hypergeometric function is identically equal to 1). Recalling that $h_j(t,x,y)=\widetilde{h}_{j-1}(t,x,y)+\tfrac{1}{2}\widetilde{h}_j(t,x,y)$, we can conclude that $h_j$, and therefore $K_2(t,x,y)$, is an entire function of $t$ for each fixed $x,y\in[-1,1]$ whenever $x\neq y$ (the edge cases were treated in Lemma \ref{Lem:Spec}).

Finally, we treat the diagonal case of $\alpha=\beta$, that is, the case $x=y$. In this case one finds that $H_{\alpha,\alpha}(0)=1$ and the hypergeometric function blows up as $s\downarrow0$. The exact behavior of this blow up will correspond to the type of singularity we encounter in $t$. Now since the first two parameters add up to the third in the hypergeometric function we consider here, one has the connection formula \cite[Eq. 15.8.10]{DLMF}
\begin{align}\label{Eq:2f1}
\mathstrut_2 F_1(\tfrac{1}{2},\tfrac{1}{2};1,q)&=-\frac{1}{\pi}\sum_{k\in\mathbb{N}_0} \frac{(1/2)_k^2}{(k!)^2}(1-q)^k[\ln(1-q)-2\psi(k+1)+2\psi((1/2)+k)],
\end{align}
where $(a)_k$ denotes Pochhammer's symbol and $\psi(z)$ the digamma function. As $H_{\alpha,\alpha}(s)\uparrow1$ as $s\downarrow0$, upon substituting the small $s$ series for $H_{\alpha,\alpha}$ and expanding, the right-hand side of \eqref{Eq:2f1} yields the small $s$ expansion of the hypergeometric function appearing in our generating function. In particular, expanding about $s=0$ now yields that this function behaves like $\ln(s)$ times nonnegative powers of $s$ plus a power series in $s^k$, $k\in\mathbb{N}$ (coming from expanding the logarithm). As we know the power series portion will not contribute any singularities to the problem by our previous arguments, we need only consider the new addition of a logarithmic term. This amounts to considering integrals of the form (the choice of upper limit of integration is immaterial as we will be concerned with whenever $t+j+k=0$ anyways)
\begin{align}\label{Eq:logpole}
\begin{split}
\int_0^1 s^{t+j-1+k}\ln(s)\, ds=\frac{s^{t+j+k}}{t+j+k}\ln(s)-\frac{s^{t+j+k}}{(t+j+k)^2}\bigg|_{s=0}^1=-\frac{1}{(t+j+k)^2},\quad k\in\mathbb{N}_0,\ \Re(t)>-j.
\end{split}
\end{align}
Notice that the logarithmic term causes a higher order pole to appear in the integral which can only be partially canceled by the simple zero of the reciprocal Gamma function. Therefore we are able to conclude that possible simple poles with respect to $t$ of $\widetilde{h}_j(t,x,x)$, $x\in(-1,1)$, appear precisely at $t=-\ell$, $\ell\in\mathbb{N}_0$ since $(j+k)\in\mathbb{N}_0$.
Recalling once again that $h_j(t,x,y)=\widetilde{h}_{j-1}(t,x,y)+\tfrac{1}{2}\widetilde{h}_j(t,x,y)$, we can conclude that the possible simple poles of $K_2(t,x,x)$, $x\in(-1,1)$ are at $t=1-\ell$, $\ell\in\mathbb{N}_0$, with the additional pole at $t=1$ coming from $\widetilde{h}_{j-1}(t,x,y)$.
\end{proof}

\subsubsection{On the careful calculation of residues}

In order to write more concise formulas for the residues in the general setting, we first delve into a more careful analysis of the functions given in \eqref{Eq:squareroot} and \eqref{Eq:Halpha} for $\alpha=\beta$.
Notice that \eqref{Eq:2f1} shows that we actually require an expression for the small $s$ series expansion of $(1-H_{\alpha,\alpha}(s))^k$ for each $k\in\mathbb{N}$ since this is the expression multiplying the logarithm:

\begin{lem}\label{Lem:Hkexp}
The following holds for $\alpha\in(0,\pi)$, $s\geq0$, and $k\in\mathbb{N}$:
\begin{equation}
(1-H_{\alpha,\alpha}(s))^k=\sum_{j=k}^\infty d_{j,k}(\alpha) s^{2j},\quad d_{j,k}(\alpha)=\frac{1}{(2j)!}\sum_{\ell=k}^{j}\binom{\ell-1}{k-1}\frac{(-1)^{\ell-k}}{4^\ell \sin^{2\ell}(\alpha)}
\sum_{m=0}^{2\ell}
(-1)^m
\binom{2\ell}{m}
(\ell-m)^{2j},\ j\geq k.
\end{equation}
\end{lem}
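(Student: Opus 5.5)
We first put $1-H_{\alpha,\alpha}(s)$ in closed form. With $\beta=\alpha$ the denominator in \eqref{Eq:Halpha} is $1-2e^{-s}\cos(2\alpha)+e^{-2s}$. Multiplying numerator and denominator by $e^{s}$ gives
\begin{equation*}
H_{\alpha,\alpha}(s)=\frac{4\sin^2(\alpha)}{e^{s}+e^{-s}-2\cos(2\alpha)}=\frac{4\sin^2(\alpha)}{2\cosh(s)-2+4\sin^2(\alpha)}.
\end{equation*}
Set $u(s):=\frac{\cosh(s)-1}{2\sin^2(\alpha)}=\frac{\sinh^2(s/2)}{\sin^2(\alpha)}$. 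Then $H_{\alpha,\alpha}(s)=(1+u)^{-1}$, and therefore
\begin{equation*}
1-H_{\alpha,\alpha}(s)=\frac{u}{1+u}.
\end{equation*}
Since $u\ge0$ and $u=O(s^2)$ as $s\downarrow0$, and since $(1-H_{\alpha,\alpha})^k$ is an even analytic function of $s$ near $0$, the expansion can only contain even powers $s^{2j}$, starting at $j=k$.

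Next we expand in powers of $u$, for $|u|<1$:
\begin{equation*}
\Big(\frac{u}{1+u}\Big)^k=\sum_{\ell\ge k}\binom{\ell-1}{k-1}(-1)^{\ell-k}u^{\ell}.
\end{equation*}
This is the standard negative-binomial series $u^k(1+u)^{-k}=\sum_{i\ge0}\binom{k+i-1}{i}(-u)^{k+i}$ reindexed by $\ell=k+i$. It produces exactly the factor $\binom{\ell-1}{k-1}(-1)^{\ell-k}$ in the claimed formula.

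We then need the Taylor series of $u^\ell=\sin^{-2\ell}(\alpha)\sinh^{2\ell}(s/2)$. Write
\begin{equation*}
\sinh^{2\ell}(s/2)=4^{-\ell}(e^{s/2}-e^{-s/2})^{2\ell}=4^{-\ell}\sum_{m=0}^{2\ell}(-1)^m\binom{2\ell}{m}e^{(\ell-m)s}.
\end{equation*}
Expanding each exponential gives the coefficient of $s^{2j}$ as
\begin{equation*}
\frac{1}{4^\ell(2j)!}\sum_{m=0}^{2\ell}(-1)^m\binom{2\ell}{m}(\ell-m)^{2j}.
\end{equation*}
The odd powers cancel by the symmetry $m\mapsto 2\ell-m$. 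This coefficient vanishes for $j<\ell$, since it is a $2\ell$-th finite difference of a polynomial of degree $2j<2\ell$. Hence $u^\ell=O(s^{2\ell})$, consistent with $u\sim s^2/(4\sin^2\alpha)$.

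Substituting this into the $u$-series and collecting the coefficient of $s^{2j}$, only the terms $k\le\ell\le j$ contribute, which yields exactly $d_{j,k}(\alpha)$.

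The one step requiring care is justifying the rearrangement of the double series. The $u$-series converges only for $|u|<1$, which holds for small $s$. Beyond that, $u/(1+u)$ is analytic on a neighbourhood of the nonnegative real axis, because $1+u=0$ would require $\cosh s=\cos 2\alpha$, which has no real solutions except at the excluded values of $\alpha$. So the identity of power series holds near $s=0$, and this is the regime used in the residue computation. If the statement is intended for all $s\ge0$, that follows by analytic continuation, provided the power series has sufficient radius of convergence. For the residues only the germ at $s=0$ matters, so we will prove the expansion for small $s$ and note that this suffices.
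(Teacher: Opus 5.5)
Your proof is correct and follows the paper's argument essentially step for step: write $1-H_{\alpha,\alpha}(s)=u/(1+u)$ with $u=\sinh^2(s/2)/\sin^2(\alpha)$, expand by the negative binomial series, insert the binomial--exponential expansion of $\sinh^{2\ell}(s/2)$, and collect the coefficient of $s^{2j}$. One correction: your intermediate $\sum_{i}\binom{k+i-1}{i}(-u)^{k+i}$ should read $\sum_i\binom{k+i-1}{i}(-1)^i u^{k+i}$.

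Your restriction to small $s$ is necessary, not just sufficient. The function $u/(1+u)$ has poles at $s=\pm 2i\alpha$ and $s=\pm 2i(\pi-\alpha)$, so the series has radius of convergence $2\min(\alpha,\pi-\alpha)$. It therefore cannot represent $(1-H_{\alpha,\alpha}(s))^k$ for all $s\ge0$, and analyticity on the real axis does not help. On the range $\sinh(s/2)<\sin(\alpha)$, the interchange of sums is justified because all Taylor coefficients of $\sinh^{2\ell}(s/2)$ are nonnegative, and the identity then extends to the full disc by uniqueness.
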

\begin{proof}
First note the following where we have used that $\cos(2\alpha)+2\sin^2(\alpha)=1$ and $1-e^{-s}=2e^{-s/2}\sinh(\tfrac{s}{2})$ (from which $\sinh^2(\tfrac{s}{2})=4^{-1}(e^{s/2}-e^{-s/2})^2$):
\begin{align}
1-H_{\alpha,\alpha}(s)&=\frac{(1-e^{-s})^2}{1-2e^{-s}\cos(2\alpha)+e^{-2s}}=\frac{4\sinh^2(\tfrac{s}{2})}{e^{s}-2\cos(2\alpha)+e^{-s}}=\frac{4\sinh^2(\tfrac{s}{2})}{e^{s}+e^{-s}-2+4\sin^2(\alpha)}\notag\\
&=\frac{\sinh^2(\tfrac{s}{2})}{4^{-1}(e^{s/2}-e^{-s/2})^2+\sin^2(\alpha)}=\frac{\sinh^2(\tfrac{s}{2})}{\sinh^2(\tfrac{s}{2})+\sin^2(\alpha)}.
\end{align}
Notice this is of the form $z/(z+a)=(z/a)[1+(z/a)]^{-1}$, so we can raise to the power $k$ and then use the negative binomial expansion to obtain (where we use $\binom{\ell-1}{\ell-k}=\binom{\ell-1}{k-1}$)
\begin{equation}
(1-H_{\alpha,\alpha}(s))^k=\sum_{m=0}^\infty (-1)^m\binom{k+m-1}{m}\frac{\sinh^{2(k+m)}(\tfrac{s}{2})}{\sin^{2(k+m)}(\alpha)}=\sum_{\ell=k}^\infty (-1)^{\ell-k}\binom{\ell-1}{k-1}\frac{\sinh^{2\ell}(\tfrac{s}{2})}{\sin^{2\ell}(\alpha)},\quad k\in\mathbb{N}.
\end{equation}
Substituting the following series expansion for $\sinh^{2\ell}(\tfrac{s}{2})$ into this series and interchanging sums (due to absolute convergence) finishes the proof:
\begin{equation}
\sinh^{2\ell}(\tfrac{s}{2})=\sum_{j=\ell}^\infty \frac{1}{4^\ell (2j)!}
\sum_{m=0}^{2\ell}
(-1)^m
\binom{2\ell}{m}
(\ell-m)^{2j}s^{2j}.
\end{equation}
\end{proof}

We now turn to the analysis of \eqref{Eq:squareroot} which is a bit more complicated:
\begin{lem}\label{Lem:Sqrt}
The following holds for $\alpha\in(0,\pi)$ and $s\geq0$: 
\begin{align}\label{Eq:Sqrtseries}
&Q_\alpha(s)=\frac{1}{\sqrt{1-2e^{-s}\cos(2\alpha)+e^{-2s}}}=\sum_{k\in\mathbb{N}_0}c_k(\alpha)s^k,\quad c_0(\alpha)=\frac{1}{2\sin(\alpha)},\\
&c_k(\alpha)=\begin{cases}
\sin(\alpha)\big(-[i\cot(2\alpha)]^{\frac{1+(-1)^k}{2}}\frac{1}{k!}\Li_{-k}\big(e^{2i\alpha}\big)-\sum_{n=1}^{k-1}c_n(\alpha) c_{k-n}(\alpha)\big),& \alpha\in(0,\pi)\backslash\{\tfrac{\pi}{2}\},\\[2mm]
\frac{(2^{k+1}-1)B_{k+1}}{(k+1)!},& \alpha=\tfrac{\pi}{2},
\end{cases}\ k\in\mathbb{N}.\notag
\end{align}
\end{lem}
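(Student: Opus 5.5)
The plan is to expand $Q_\alpha(s)^2$ rather than $Q_\alpha(s)$ itself, because the square is a rational function of $e^{-s}$ and can be expanded exactly. The recursion for $c_k(\alpha)$ then comes from a Cauchy product.

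\textbf{Step 1: the constant term.} Evaluating $Q_\alpha$ at $s=0$ gives
\[
c_0(\alpha)=Q_\alpha(0)=(2-2\cos 2\alpha)^{-1/2}=\frac{1}{2\sin\alpha},
\]
which is positive since $\alpha\in(0,\pi)$. Analyticity of $Q_\alpha$ near $s=0$ is already noted before \eqref{Eq:squareroot}: the radicand equals $4\sin^2\alpha\neq 0$ at $s=0$.

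\textbf{Step 2: the case $\alpha\neq\pi/2$ via partial fractions.} Set $w=e^{2i\alpha}$, so that $w\neq\bar w$. Factor
\[
1-2e^{-s}\cos 2\alpha+e^{-2s}=(1-we^{-s})(1-\bar we^{-s})
\]
and use partial fractions in $z=e^{-s}$:
\[
Q_\alpha(s)^2=\frac{1}{w-\bar w}\Big[\frac{w}{1-wz}-\frac{\bar w}{1-\bar wz}\Big].
\]
Each piece is $1$ plus the geometric series $\sum_{n\ge1}w^n e^{-ns}=\frac{we^{-s}}{1-we^{-s}}$. Differentiating $\sum_n w^n e^{-ns}$ $k$ times at $s=0$ gives Taylor coefficients $(-1)^k\Li_{-k}(w)/k!$, for both $w$ and $\bar w$. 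This is valid near $s=0$ because $|w|=1$ and $w\neq1$, so $\Li_{-k}(w)$ is a rational function of $w$ regular there.

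Combining, the coefficient of $s^k$ in $Q_\alpha^2$ is
\[
\frac{(-1)^k}{k!}\,\frac{w\Li_{-k}(w)-\bar w\Li_{-k}(\bar w)}{w-\bar w}\qquad(k\ge1).
\]
To reduce this to a single polylogarithm, use $\bar w=1/w$ together with the inversion identity $\Li_{-k}(1/w)=(-1)^{k+1}\Li_{-k}(w)$ for $k\ge1$. By parity this makes $\Li_{-k}(w)$ purely real or purely imaginary. One can also shortcut via the symmetry $e^{-s}Q_\alpha(s)^{2}=(2\cosh s-2\cos2\alpha)^{-1}$, which is even in $s$. Simplifying with $w-\bar w=2i\sin2\alpha$ and $w+\bar w=2\cos 2\alpha$ should produce exactly the factor $-[i\cot(2\alpha)]^{\frac{1+(-1)^k}{2}}\Li_{-k}(e^{2i\alpha})/k!$ appearing in \eqref{Eq:Sqrtseries}.

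\textbf{Step 3: solving the Cauchy product.} Write $Q_\alpha^2=\big(\sum c_ns^n\big)^2$. For $k\ge1$ its $s^k$ coefficient is $2c_0c_k+\sum_{n=1}^{k-1}c_nc_{k-n}$. Since $2c_0=1/\sin\alpha$, solving for $c_k$ gives
\[
c_k=\sin\alpha\Big(\big[\text{coefficient of }s^k\text{ in }Q_\alpha^2\big]-\sum_{n=1}^{k-1}c_nc_{k-n}\Big),
\]
which is the stated recursion.

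\textbf{Step 4: the case $\alpha=\pi/2$.} Here $w=\bar w=-1$, so the partial fractions degenerate and we compute directly:
\[
Q_{\pi/2}(s)=\frac{1}{1+e^{-s}}=\frac12+\frac12\tanh\big(\tfrac s2\big).
\]
Inserting the standard series $\tanh u=\sum_{n\ge1}\frac{2^{2n}(2^{2n}-1)B_{2n}}{(2n)!}u^{2n-1}$ with $u=s/2$ gives coefficients $\frac{(2^{k+1}-1)B_{k+1}}{(k+1)!}$. These vanish automatically for even $k\ge2$, since then $B_{k+1}=0$. The case $k=0$ is consistent only if we do not use this formula there (it would give $-1/2$); indeed $c_0=1/2=1/(2\sin(\pi/2))$ is taken from the general formula, which is why the stated Bernoulli formula is restricted to $k\in\mathbb{N}$.

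\textbf{Main obstacle.} The hard part is the sign and parity bookkeeping in Step 2: matching $\frac{w\Li_{-k}(w)-\bar w\Li_{-k}(\bar w)}{w-\bar w}$, with its $(-1)^k$, to the compact factor $-[i\cot2\alpha]^{(1+(-1)^k)/2}\Li_{-k}(e^{2i\alpha})$. I would settle the conventions first by checking $k=1$ and $k=2$ against a direct Taylor expansion of $(2\cosh s-2\cos2\alpha)^{-1}e^{s}$. Then I would prove the general identity using the inversion formula and the relation $\bar w=1/w$.
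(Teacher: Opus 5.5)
Your proposal is correct and follows essentially the same route as the paper: square $Q_\alpha$, split it into partial fractions in $e^{-s}$, and read off the Taylor coefficients via $\Li_{-k}(z)=(z\partial_z)^k\frac{z}{1-z}$. From there both arguments reduce by parity, solve the Cauchy product for $c_k$, and handle $\alpha=\pi/2$ separately through $\tanh(s/2)$. The only cosmetic difference is how the two polylogarithms are collapsed into one: you use the inversion identity $\Li_{-k}(1/w)=(-1)^{k+1}\Li_{-k}(w)$, while the paper uses conjugation symmetry together with the cotangent-derivative formula for $\Li_{-j}(e^{2i\alpha})$; your bookkeeping does yield exactly $-[i\cot(2\alpha)]^{\frac{1+(-1)^k}{2}}\Li_{-k}(e^{2i\alpha})/k!$.
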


\begin{remark}\label{Rem:sqrt}
Utilizing Bell polynomials results in the following alternate representation of the coefficients $c_k$:
\begin{equation}
c_k(\alpha)=\frac{1}{2\sin(\alpha)}\sum_{n=0}^k\frac{(-1)^n(2n)!}{16^{n}(n!)^2 \sin^{2n}(\alpha)} \widehat{B}_{k,n}(r_1,\dots,r_{k-n+1}),\quad r_\ell=(-1)^\ell\frac{2^\ell-2\cos(2\alpha)}{\ell!},\quad \ell\in\mathbb{N}.
\end{equation}
\end{remark}

\begin{proof}[Proof of Lemma \ref{Lem:Sqrt}.]
We begin with the case $\alpha=\tfrac{\pi}{2}$:
\begin{equation}
Q_{\tfrac{\pi}{2}}(s)=\frac{1}{1+e^{-s}}=\frac{1}{2}+\frac{1}{2}\tanh(\tfrac{s}{2})=\frac{1}{2}+\sum_{n\in\mathbb{N}}
\frac{(2^{2n}-1)B_{2n}}{(2n)!}\, s^{2n-1}.
\end{equation}

For the general case $\alpha\in(0,\pi)\backslash\{\tfrac{\pi}{2}\}$, we consider the square of $Q_\alpha(s)$ to write
\begin{align}
\label{eq:2.49}
\frac{2i\sin(2\alpha)}{1-2e^{-s}\cos(2\alpha)+e^{-2s}}&=\frac{e^{2i\alpha}}{1-e^{-s}e^{2i\alpha}}-\frac{e^{-2i\alpha}}{1-e^{-s}e^{-2i\alpha}}=-e^{2i\alpha}\frac{v_-}{1-v_-}+e^{-2i\alpha}\frac{v_+}{1-v_+},
\end{align}
where we have made the substitution $v_\pm=e^{s\pm 2i\alpha}$ noting that $\partial_s^{j}=(v_\pm\partial_{v_\pm} )^{j}$ under this substitution with $s=0$ corresponding to $v_\pm=e^{\pm 2i\alpha}$. Next, recall the identity (noting $\Li_1(z)=-\ln(1-z)$)
\begin{align}\label{Eq:Polylogrec}
\Li_{-n}(z)=\bigg(z\frac{\partial}{\partial z}\bigg)^n \frac{z}{1-z},\quad n\in\mathbb{N}_0,\ z\neq 1.
\end{align}
A short calculation now yields that (noting for $z=e^{2i\alpha}$, $z\partial_z=\frac{1}{2i}\partial_\alpha$ and $z/(1-z)=-\frac{1}{2}+\frac{i}{2}\cot(\alpha)$)
\begin{align}\label{Eq:Polylogid}
\Li_{-j}(e^{2i\alpha})=\frac{i^{-j}}{2^{j}}\frac{\partial^{j}}{\partial\alpha^{j}}\frac{e^{2i\alpha}}{1-e^{2i\alpha}}=i\frac{i^{-j}}{2^{j+1}}\frac{\partial^{j}}{\partial\alpha^{j}}\cot(\alpha),\quad j\in\mathbb{N},\ \alpha\in(0,\pi),
\end{align}
showing $\Li_{-j}\big(e^{2i\alpha}\big)$ is purely imaginary whenever $j\geq1$ is even, and purely real whenever $j\geq1$ is odd.
Thus we can conclude that the coefficients for the square of the function $Q_\alpha(s)$ can be written as
\begin{align}\label{Eq:akmid}
a_k(\alpha)=\frac{i}{2\sin(2\alpha) k!}\big[e^{2i\alpha}\Li_{-k}\big(e^{-2i\alpha}\big)-e^{-2i\alpha}\Li_{-k}\big(e^{2i\alpha}\big)\big]=\frac{1}{\sin(2\alpha) k!}\Im\big[e^{-2i\alpha}\Li_{-k}\big(e^{2i\alpha}\big)\big],
\end{align}
where we have used that for real $s$ and $z$ not on the branch cut $[1,\infty)$ one has that $\overline{\Li_s(z)}=\Li_s(\overline{z})$.
Thus separating by parity in $k$ leads to
\begin{equation}\label{Eq:ak}
a_k(\alpha)=\begin{cases}
-\frac{1}{k!}\Li_{-k}\big(e^{2i\alpha}\big),& k\geq1 \text{ odd},\\
\frac{-i\cot(2\alpha)}{ k!}\Li_{-k}\big(e^{2i\alpha}\big),& k\geq1 \text{ even},
\end{cases}=-[i\cot(2\alpha)]^{\frac{1+(-1)^k}{2}}\frac{1}{k!}\Li_{-k}\big(e^{2i\alpha}\big),\quad k\in\mathbb{N}.
\end{equation}
Returning to our original square root function $Q_\alpha(s)$, we have by the Cauchy product,
\begin{equation}
a_n(\alpha)=\sum_{k=0}^{n}c_k(\alpha) c_{n-k}(\alpha)\Longrightarrow c_0(\alpha)=\frac{1}{2\sin(\alpha)},\quad c_n(\alpha)=\sin(\alpha)\bigg(a_n(\alpha)-\sum_{k=1}^{n-1}c_k(\alpha) c_{n-k}(\alpha)\bigg),\quad n\in\mathbb{N},
\end{equation}
finishing the proof.
\end{proof}

We are now in a position to complete the calculation of residues.

\begin{proof}[Proof of Theorem \ref{Thm:Main2} (Residues).]
First note that the cases $x=y\in\{-1,1\}$ are covered by Lemma \ref{Lem:Spec} once again. So we now restrict ourselves to the open interval $x=y\in(-1,1)$.

To find the residues in this most general setting, we need to track the series coefficients a bit more carefully to find the overall contribution from $\widetilde{h}_j$, which will then yield the overall residue of $h_j$ and hence $K_2$.
We begin with the analysis of the hypergeometric function in $\widetilde{h}_j$ as $s\downarrow0$.
Since we are only considering residues and the poles occur only from the $\ln(s)$ term, from \eqref{Eq:2f1} we need only consider for each $k\in\mathbb{N}_0$,
\begin{equation}\label{Eq:35}
-\frac{1}{\pi} \frac{(1/2)_k^2}{(k!)^2}(1-H_{\alpha,\alpha}(s))^k[2\ln(s)]=-\frac{2\ln(s)}{\pi} \frac{(1/2)_k^2}{(k!)^2}\sum_{j=k}^\infty d_{j,k}(\alpha) s^{2j},
\end{equation}
with $d_{j,k}$ as in Lemma \ref{Lem:Hkexp} for $k\geq 1$ and $d_{j,0}(\alpha)=\delta_{j,0}$. The multiple of $2$ on the log term stems from
\begin{align}
    \ln(1-q)=\ln(1-H_{\alpha,\alpha}(s))=\ln\Big(-\sum_{j\in\mathbb{N}} d_{j,1}(\alpha)s^{2j}\Big) =2\ln(s)+\ln\Big(-\sum_{j\in\mathbb{N}} d_{j,1}(\alpha)s^{2j-2}\Big).
\end{align}
Multiplying \eqref{Eq:35} by the power series of the square root from \eqref{Eq:Sqrtseries}
and $s^{t+j-1}$, then using Cauchy product while factoring out $s^{2k}$ yields the terms multiplying the logarithmic term:
\begin{align}\label{Eq:2F1Series2}
-\frac{2\ln(s)s^{t+j-1}}{\pi} \frac{(1/2)_k^2}{(k!)^2}\bigg(\sum_{j=k}^\infty d_{j,k}(\alpha) s^{2j}\bigg)\bigg(\sum_{\ell\in\mathbb{N}_0}c_\ell(\alpha)s^\ell\bigg)=-\frac{2\ln(s)s^{t+j-1+2k}}{\pi} \frac{(1/2)_k^2}{(k!)^2}\sum_{i\in\mathbb{N}_0} b_{i,k}(\alpha) s^{i},&\notag \\
b_{i,k}(\alpha)=\sum_{j=0}^{\lfloor i/2\rfloor} d_{k+j,k}(\alpha)c_{i-2j}(\alpha),\quad \alpha\in(0,\pi),\ k\in\mathbb{N}_0.&
\end{align}
(Notice that for $k=0$, $d_{j,0}(\alpha)=\delta_{j,0}$, and this reduces to the series of $Q_\alpha(s)$ as expected.)

Now, this holds for each $k\in\mathbb{N}_0$ coming from the expansion for the hypergeometric function in \eqref{Eq:2f1}.
In particular, the logarithmic part of the integrand in \eqref{Eq:hint} (including the Gamma prefactor) after setting $\alpha=\beta$ and integrating near zero as in \eqref{Eq:logpole} will be of the form
\begin{equation}
\frac{2}{\pi\Gamma(t+j)} \sum_{k\in\mathbb{N}_0}\frac{(1/2)_k^2}{(k!)^2}\sum_{i\in\mathbb{N}_0}  \frac{b_{i,k}(\alpha)}{(i+t+j+2k)^2},\quad \alpha\in(0,\pi),\ j\in\mathbb{N}_0.
\end{equation}
So we need to consider all combinations of $k,i\in\mathbb{N}_0$ such that $2k+i=\ell\in\mathbb{N}_0$ to find the contribution to the residue at $t+j=-\ell$ from $\widetilde{h}_j$ for each $\ell\in\mathbb{N}_0$:
\begin{equation}
\frac{2(-1)^{\ell}\ell!}{\pi(t+j+\ell)} \sum_{k=0}^{\lfloor\ell/2\rfloor} \frac{(1/2)_k^2}{(k!)^2}b_{\ell-2k,k}(\alpha),
\end{equation}
where the prefactor $(-1)^{\ell}\ell!$ (and reduction of the pole) comes from behavior near $-\ell$ of the reciprocal gamma function.

Now we will have to once again recall that $h_j(t,x,y)=\widetilde{h}_{j-1}(t,x,y)+\tfrac{1}{2}\widetilde{h}_j(t,x,y)$ to find the contribution from $h_j$ itself.
In particular, one needs to be careful with accounting the contributions for different values of $j$ in the sum over $j$ in \eqref{Eq:6a}.
The full contribution from $h_j$ for the possible simple poles at $t=-m$, $m\in\mathbb{N}_0$ comes from a careful consideration of the indices, resulting in (after multiplying by the prefactor $e^{-\gamma_E t}$ and $p_{j,S_2}(t)$ from \eqref{Eq:6a}) for $\alpha\in(0,\pi),\ m\in\mathbb{N}_0$,
\begin{align}\label{Eq:GeneralPole}
\frac{2e^{-\gamma_E t}}{\pi(t+m)}\bigg[\sum_{\ell=1}^{m+1}(-1)^{\ell}\ell! p_{m+1-\ell,S_2}(t)\sum_{k=0}^{\lfloor\ell/2\rfloor} \frac{(1/2)_k^2}{(k!)^2}b_{\ell-2k,k}(\alpha)+\frac{1}{2}\sum_{\ell=0}^{m}(-1)^{\ell}\ell! p_{m-\ell,S_2}(t)\sum_{k=0}^{\lfloor\ell/2\rfloor} \frac{(1/2)_k^2}{(k!)^2}b_{\ell-2k,k}(\alpha)\bigg].
\end{align}
This yields the residues at the possible simple poles $t=-m$, however, there is an additional pole at $t=1$ from $\widetilde{h}_{j-1}$ with $j=0$ and $\ell=0$ above which has residue given by, since $p_{0,S_2}(t)=1$,
\begin{equation}
\frac{2e^{-\gamma_E}}{\pi}p_{0,S_2}(1)b_{0,0}(\alpha)=\frac{2e^{-\gamma_E}}{\pi}c_{0}(\alpha)=\frac{e^{-\gamma_E}}{\pi\sin(\alpha)}.
\end{equation}
Finally, note that $t=m=0$ is not a pole since the residue in \eqref{Eq:GeneralPole} becomes
\begin{align}
&-\frac{2}{\pi} p_{0,S_2}(0) b_{1,0}(\alpha)+\frac{1}{\pi} p_{0,S_2}(0) b_{0,0}(\alpha)=-\frac{2}{\pi}c_1(\alpha)+\frac{1}{\pi}c_0(\alpha)=-\frac{2}{4\pi\sin(\alpha)}+\frac{1}{2\pi\sin(\alpha)}=0,
\end{align}
completing the proof of the structure of residues.
\end{proof}

\appendix

\section{Some identities regarding Bell polynomials and polylogarithms}\label{AppPoly}

In this appendix, we offer a different way of expanding the factor $(1-H_{\alpha,\alpha}(s))^k$ needed when calculating the residues above via Bell polynomials. When comparing to the prior method, this results in some interesting identities regarding Bell polynomials with polylogarithm entries.

\begin{prop}\label{Lem:H}
The following holds for $\alpha\in(0,\pi)$ and $s\geq0$: 
\begin{align}\label{Eq:Hseries}
&H_{\alpha,\alpha}(s)=\frac{4e^{-s}\sin^2(\alpha)}{1-2e^{-s}\cos(2\alpha)+e^{-2s}}=1+\sum_{j\in\mathbb{N}} C_j(\alpha) s^{2j},\ C_j(\alpha)=\begin{cases}
-i\frac{2\tan(\alpha)}{(2j)!}\Li_{-2j}(e^{2i\alpha}),& \hspace{-.1cm}\alpha\in(0,\pi)\backslash\{\tfrac{\pi}{2}\},\\[2mm]
\frac{2(2^{2j+2}-1)B_{2j+2}}{(j+1)(2j)!},& \hspace{-.1cm}\alpha=\tfrac{\pi}{2}.
\end{cases}
\end{align}
\end{prop}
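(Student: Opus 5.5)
The plan is to avoid composing power series and instead use the same partial-fraction trick as in \eqref{eq:2.49} in the proof of Lemma \ref{Lem:Sqrt}, now keeping the factor $e^{-s}$ in the numerator. Write $w=e^{2i\alpha}$ for $\alpha\in(0,\pi)\backslash\{\tfrac{\pi}{2}\}$. Since $1-2e^{-s}\cos(2\alpha)+e^{-2s}=(1-we^{-s})(1-\overline{w}e^{-s})$, a common-denominator check gives
\begin{equation*}
\frac{1}{1-we^{-s}}-\frac{1}{1-\overline{w}e^{-s}}=\frac{2i\sin(2\alpha)e^{-s}}{1-2e^{-s}\cos(2\alpha)+e^{-2s}}.
\end{equation*}
Using $4\sin^2(\alpha)/(2i\sin(2\alpha))=-i\tan(\alpha)$, this yields
\begin{equation*}
H_{\alpha,\alpha}(s)=-i\tan(\alpha)\bigg[\frac{we^{-s}}{1-we^{-s}}-\frac{\overline{w}e^{-s}}{1-\overline{w}e^{-s}}\bigg],
\end{equation*}
because the constant $1$ in $1/(1-u)=1+u/(1-u)$ cancels between the two terms.

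Next I expand each bracketed term in powers of $s$. With $u=we^{-s}$ we have $\partial_s=-u\partial_u$. By \eqref{Eq:Polylogrec}, the coefficient of $s^k$ in $u/(1-u)$ is $\frac{(-1)^k}{k!}\Li_{-k}(w)$, and similarly with $\overline{w}$. Since $\overline{\Li_{-k}(w)}=\Li_{-k}(\overline{w})$, the coefficient of $s^k$ in $H_{\alpha,\alpha}$ is
\begin{equation*}
-i\tan(\alpha)\frac{(-1)^k}{k!}\,2i\,\Im\Li_{-k}(w).
\end{equation*}
Now \eqref{Eq:Polylogid} tells us which terms survive. For odd $k\geq1$, $\Li_{-k}(w)$ is real, so these coefficients vanish; this is consistent with the evenness in $s$ visible from $1-H_{\alpha,\alpha}(s)=\sinh^2(\tfrac s2)/(\sinh^2(\tfrac s2)+\sin^2\alpha)$ in Lemma \ref{Lem:Hkexp}. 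For even $k=2j\geq2$, $\Li_{-2j}(w)$ is purely imaginary, so $2i\Im\Li_{-2j}(w)=2\Li_{-2j}(w)$. This gives exactly $C_j(\alpha)=-i\frac{2\tan(\alpha)}{(2j)!}\Li_{-2j}(e^{2i\alpha})$. For $k=0$ I use $\Li_0(w)=w/(1-w)=-\tfrac12+\tfrac i2\cot(\alpha)$, so the constant term is $-i\tan(\alpha)\cdot i\cot(\alpha)=1$, as required.

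For $\alpha=\tfrac{\pi}{2}$, where $\tan\alpha$ is undefined, I compute directly:
\begin{equation*}
H_{\pi/2,\pi/2}(s)=\frac{4e^{-s}}{(1+e^{-s})^2}=\sech^2(\tfrac s2)=2\frac{d}{ds}\tanh(\tfrac s2).
\end{equation*}
I then differentiate term by term the series $\tanh(\tfrac s2)=\sum_{n\in\N}\frac{2(2^{2n}-1)B_{2n}}{(2n)!}s^{2n-1}$ already used in the proof of Lemma \ref{Lem:Sqrt}. Taking $n=j+1$, the coefficient of $s^{2j}$ is $\frac{4(2j+1)(2^{2j+2}-1)B_{2j+2}}{(2j+2)!}=\frac{2(2^{2j+2}-1)B_{2j+2}}{(j+1)(2j)!}$. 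The $j=0$ term gives $1$, using $B_2=\tfrac16$.

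The only delicate point is the real/imaginary parity of $\Li_{-k}(e^{2i\alpha})$, and \eqref{Eq:Polylogid} settles it. Convergence for small $s\geq0$ follows because the functions involved are analytic near $s=0$.
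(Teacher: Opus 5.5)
Your proof is correct and follows essentially the same route as the paper. It uses the same partial-fraction identity producing the prefactor $-i\tan(\alpha)$, the recursion \eqref{Eq:Polylogrec} with $u=e^{\pm 2i\alpha}e^{-s}$, conjugation symmetry together with the parity information from \eqref{Eq:Polylogid}, and $\sech^2(\tfrac{s}{2})$ for $\alpha=\tfrac{\pi}{2}$. Your version is slightly more explicit: it tracks the sign $(-1)^k$, checks the constant and odd-order coefficients directly instead of invoking evenness, derives the $\alpha=\tfrac{\pi}{2}$ coefficients by differentiating the $\tanh$ series, and correctly restricts the expansion to small $s$.
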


\begin{remark}\label{Rem:H}
Note the value for $\alpha=\tfrac{\pi}{2}$ can be found by taking the limit in the first line. We also point out that using standard identities for the polylogarithm (e.g. combining \eqref{eq:2.49} and Exercise 5.2 in \cite{Co74}), $C_j(\alpha)$ can also be written as 
\begin{align}
C_j(\alpha)=-i\frac{2\tan(\alpha)}{(2j)!}\sum_{n=0}^{2j} n! S(2j+1,n+1)\bigg(-\frac{1}{2}+\frac{i}{2}\cot(\alpha)\bigg)^{n+1},\quad \alpha\in(0,\pi)\backslash\{\tfrac{\pi}{2}\},
\end{align}
where $S(n,k)$ are Stirling numbers of the second kind. Moreover, utilizing Bell polynomials leads to 
\begin{equation}
C_j(\alpha)=\sum_{n=0}^{2j}\frac{(-1)^n}{4^n\sin^{2n}(\alpha)}\widehat{B}_{2j,n}(w_1,\dots,w_{2j-n+1}),\quad w_{\ell}=\frac{1+(-1)^\ell}{\ell!},\quad \ell\in\mathbb{N},
\end{equation}
which yields interesting identities for these sums of Bell polynomials when compared to the previous forms.
\end{remark}

\begin{proof}[Proof of Proposition \ref{Lem:H}.]
First note that $H_{\alpha,\alpha}(s)$ is an even function of $s$. We begin with the case $\alpha=\tfrac{\pi}{2}$, which must be treated separately, but also has a closed trigonometric form:
\begin{equation}
H_{\tfrac{\pi}{2},\tfrac{\pi}{2}}(s)=\sech^2(\tfrac{s}{2})=1+\sum_{j\in\mathbb{N}} C_j(\tfrac{\pi}{2}) s^{2j},\quad C_j(\tfrac{\pi}{2})=\frac{2(2^{2j+2}-1)B_{2j+2}}{(j+1)(2j)!}.
\end{equation}

Turning to the general case $\alpha\in(0,\pi)\backslash\{\tfrac{\pi}{2}\}$, we proceed similarly to the proof of Lemma \ref{Lem:Sqrt}:
\begin{align}
& H_{\alpha,\alpha}(s)=\frac{4e^{-s}\sin^2(\alpha)}{1-2e^{-s}\cos(2\alpha)+e^{-2s}}=4e^{-s}\sin^2(\alpha)\frac{1}{(1-e^{-s}e^{2i\alpha})(1-e^{-s}e^{-2i\alpha})}\\
&\quad =\frac{4e^{-s}\sin^2(\alpha)}{2i\sin(2\alpha)}\bigg(\frac{e^{2i\alpha}}{1-e^{-s}e^{2i\alpha}}-\frac{e^{-2i\alpha}}{1-e^{-s}e^{-2i\alpha}}\bigg)=-i\tan(\alpha)\bigg(\frac{u_+}{1-u_+}-\frac{u_-}{1-u_-}\bigg),\notag
\end{align}
where we have made the substitution $u_\pm =e^{\pm 2i\alpha}e^{-s}$ since $\partial_s^{2j}=(u_\pm\partial_{u_\pm} )^{2j}$ under this substitution with $s=0$ corresponding to $u_\pm=e^{\pm 2i\alpha}$.
Recalling \eqref{Eq:Polylogrec}, this shows that the coefficient of $s^{2j}$ of the series expansion of $H_{\alpha,\alpha}(s)$ about $s=0$ for $j\in\mathbb{N}$ amounts to considering
\begin{align}
(u_\pm\partial_{u_\pm})^{2j}\frac{u_\pm}{1-u_\pm}\bigg|_{u_\pm =e^{\pm 2i\alpha}}=\Li_{-2j}(e^{\pm 2i\alpha}),\quad j\in\mathbb{N}.
\end{align}
Noticing once again that for real $s$ and $z$ not on the branch cut $[1,\infty)$ one has that $\overline{\Li_s(z)}=\Li_s(\overline{z})$, we are able to conclude that the coefficient of $s^{2j}$ is given by
\begin{equation}
\frac{-i\tan(\alpha)}{(2j)!}(2i)\Im\big(\Li_{-2j}(e^{2i\alpha})\big)=\frac{2\tan(\alpha)}{(2j)!}\Im\big(\Li_{-2j}(e^{2i\alpha})\big)=-i\frac{2\tan(\alpha)}{(2j)!}\Li_{-2j}(e^{2i\alpha}),\quad j\in\mathbb{N},
\end{equation}
where the last equality follows from \eqref{Eq:Polylogid}.
\end{proof}

We now offer an alternate way of analyzing the series coefficients of $(1-H_{\alpha,\alpha}(s))^k$. 
In this direction, it will be helpful to first note that expanding the left-hand side of \eqref{Eq:BellGena} in a power series and taking the coefficient of $u^k$ on both sides yields the identity
\begin{equation}
\bigg(\sum_{\ell\in\mathbb{N}} x_\ell y^\ell\bigg)^k=\sum_{j=k}^\infty \widehat{B}_{j,k}(x_1,\dots,x_{j-k+1})y^j.
\end{equation}
This allows us to write the following series representation via the expansion found in Proposition \ref{Lem:H}:
\begin{equation}
(1-H_{\alpha,\alpha}(s))^k=\bigg(-\sum_{j\in\mathbb{N}}\,C_j(\alpha) s^{2j}\bigg)^k=\sum_{\ell=k}^\infty (-1)^{k}\widehat{B}_{\ell,k}(C_1(\alpha),\dots,C_{\ell-k+1}(\alpha)) s^{2\ell}.
\end{equation}
By \eqref{Eq:BellEq} and Lemma \ref{Lem:Hkexp}, this yields the identity for $j\geq k$, $k\in\mathbb{N}$, and $\alpha\in(0,\pi)\backslash\{\tfrac{\pi}{2}\}$,
\begin{equation}
i^k 2^k \tan^k(\alpha)\sum_{j_1+\cdots+j_k=j}\prod_{n=1}^k \frac{\Li_{-2j_n}(e^{2i\alpha})(\alpha)}{(2j_n)!}=\frac{1}{(2j)!}\sum_{\ell=k}^{j}\binom{\ell-1}{k-1}\frac{(-1)^{\ell-k}}{4^\ell \sin^{2\ell}(\alpha)}
\sum_{m=0}^{2\ell}
(-1)^m
\binom{2\ell}{m}
(\ell-m)^{2j}.
\end{equation}
The special case $\alpha=\tfrac{\pi}{2}$, which can be understood via taking the limit once again or via \eqref{Eq:Hseries}, yields
\begin{equation}
2^k\sum_{j_1+\cdots+j_k=j}\prod_{n=1}^k \frac{(1-2^{2j_n+2})B_{2j_n+2}}{(j_n+1)(2j_n)!}=\frac{1}{(2j)!}\sum_{\ell=k}^{j}\binom{\ell-1}{k-1}\frac{(-1)^{\ell-k}}{4^\ell}
\sum_{m=0}^{2\ell}
(-1)^m
\binom{2\ell}{m}
(\ell-m)^{2j}.
\end{equation}

\medskip

\noindent {\bf Acknowledgments.}
We thank Ovidiu Costin, Guglielmo Fucci, Vincent Martinez, and Mateusz Piorkowski for feedback on earlier forms of this paper. We are greatly indebted to Daniel Herden for pointing out the references \cite{E13,Ya12}. JS was supported in part by an AMS--Simons Travel Grant.


\begin{thebibliography}{99}
\bibitem{CEV13} C. P. Chen, N.\ Elezovi\'{c}, and L. Vuk\v{s}i\'{c}, \textit{Asymptotic formulae associated with the Wallis power function and digamma function}, J. Class. Anal. \textbf{2}(2), 151--166 (2013).

\bibitem{CV24} H.\ Chen and L.\ Véron, \textit{The Cauchy problem associated to the logarithmic Laplacian with an application to the fundamental solution}, J. Funct. Anal. {\bf 287}, 3, 72 pp (2024).

\bibitem{CW19} H.\ Chen and T.\ Weth, \textit{The Dirichlet problem for the logarithmic Laplacian}, Comm. Partial Differential Equations {\bf 44}, 11,  1100--1139 (2019).

\bibitem{Co74} L. Comtet, \textit{Advanced Combinatorics: The Art of Finite and Infinite Expansions}, Reidel Publishing Company, Dordrecht/Boston, 1974.

\bibitem{DS13} G.\ V.\ Dunne and C. Schubert, \textit{Bernoulli number identities from quantum field theory and topological string theory}, Commun. Number Theory Phys. {\bf 7}, 2, 225--249. (2013). 

\bibitem{E13} N.\ Elezovi\'{c}, \textit{Asymptotic expansions of exponentials of digamma function and identity for Bernoulli polynomials}, preprint \arxiv{1312.1604}.

\bibitem{FeS25} J.\ C.\ Fernández and A.\ Saldaña, {\it The conformal logarithmic Laplacian on the sphere:
Yamabe-type problems and Sobolev spaces}, preprint \arxiv{2507.21779}.

\bibitem{FPS25} G.\ Fucci,  M.\ Piorkowski, and J.\ Stanfill, {\it The spectral $\zeta$-function for quasi-regular Sturm--Liouville operators}, Lett. Math. Phys. {\bf 115}, 8, 48 pp. (2025).

\bibitem{FS25} G.\ Fucci and J.\ Stanfill, \textit{The exotic structure of the spectral $\zeta$-function for the Schr\"odinger operator with P\"oschl--Teller potential}, Ann. H. Poincar\'e, 44 pp. (2025). \doi{10.1007/s00023-025-01587-7}

\bibitem{GS19} P.\ Garbaczewski and V.\ Stephanovich, \textit{Fractional Laplacians in bounded domains: Killed, reflected, censored and taboo Lévy
flights}, Phys. Rev. E {\bf 99}, 4, 22 pp. (2019).

\bibitem{GM25} D.\ M.\ Gerontogiannis and B. Mesland, \textit{The logarithmic Dirichlet Laplacian on Ahflors regular spaces}, Trans. Amer. Math. Soc. {\bf 378}, 1, 651--678 (2025).

\bibitem{G05} I.\ Gessel, \textit{On Miki's identity for Bernoulli numbers}, J. Number Theory {\bf 110}, 1, 75--82 (2005).

\bibitem{LW21} A.\ Laptev and T.\ Weth, \textit{Spectral properties of the logarithmic Laplacian}, Anal. Math. Phys. {\bf 11}, 133, 24 pp. (2021).

\bibitem{Luke69} Y.L.\ Luke, \textit{The Special Functions and Their Approximations, vol. I}, Academic Press, New York, 1969.

\bibitem{Max56} L.C.\ Maximon, \textit{A generating function for the product of two Legendre polynomials}, Norske Vid. Selsk. Forh., Trondheim \textbf{29}, 5 pp. (1956).

\bibitem{Maz18} V. Maz'ya, \textit{Seventy Five (Thousand) Unsolved Problems in Analysis and Partial Differential Equations}, Integr. Equ. Oper. Theory \textbf{90}, 25 (2018).

\bibitem{MNP00} V. Maz'ya, S. Nazarov and B. Plamenevskij, \textit{Asymptotic Theory of Elliptic Boundary Value Problems in Singularly Perturbed Domains, vol. II}, Springer, Basel, 2000.

\bibitem{DLMF} W. J. Olver, A. B. Olde Daalhuis, D. W. Lozier, B. I. Schneider, R. F. Boisvert, C. W. Clark, B. R. Miller, B. V. Saunders, H. S. Cohl, and M. A. McClain (eds.), {\it NIST Digital Library of Mathematical Functions}, http://dlmf.nist.gov/, Release 1.1.12 of 2023-12-15.

\bibitem{OS22} C. O'Sullivan, \textit{De Moivre and Bell polynomials}, Expo. Math. \textbf{40}, 4, 23 pp. (2022).

\bibitem{Sz39} G. Szegö, \textit{Orthogonal Polynomials}, American Mathematical Society, Providence, RI, 1939.

\bibitem{T64} E.O. Tuck, \textit{Some methods for flows past blunt slender bodies}, J. Fluid Mech. \textbf{18} (4), 619--635 (1964).

\bibitem{Ya12} S. Yang, \textit{On an open problem of Chen and Mortici concerning the
Euler–Mascheroni constant}, J. Math. Anal. Appl. \textbf{396}, 689--693 (2012). 

\bibitem{Zag14} D. Zagier, \textit{Curious and exotic identities for Bernoulli
numbers} in T. Arakawa, T. Ibukiyama, M. Kaneko, \textit{Bernoulli Numbers and Zeta Functions}, Springer, Basel, 2014.
\end{thebibliography}
\end{document}